\newcounter{CountAlpha}
\theoremstyle{theorem}
\newtheorem{Thm}{Theorem}[section]
\newtheorem{Lem}[Thm]{Lemma}
\newtheorem{Cor}[Thm]{Corollary}
\newtheorem{Prop}[Thm]{Proposition}
\theoremstyle{definition}
\newtheorem{Def}[Thm]{Definition}
\newtheorem{Ex}[Thm]{Example}
\newtheorem{Rk}[Thm]{Remark}
\newtheorem{Obs}[Thm]{Observation}
\newtheorem{Constr}[Thm]{Construction}
\newtheorem{Con}[Thm]{Construction}
\newcommand{\car}{\operatorname{char}}
\newcommand{\Spec}{\operatorname{Spec}}
\newcommand{\Proj}{\operatorname{Proj}}
\newcommand{\ini}{\operatorname{in}}
\newcommand{\inv}{\operatorname{inv}}
\newcommand{\Int}{{\operatorname{Int}}}
\newcommand{\Ker}{\operatorname{ker}}
\newcommand{\K}{k}
\newcommand{\gqz}{{\geq 0}}
\newcommand{\Bl}{{\operatorname{Bl}}}
\newcommand{\IA}{\mathbb{A}}
\newcommand{\IF}{\mathbb{F}}
\newcommand{\IG}{\mathbb{G}}
\newcommand{\IP}{\mathbb{P}}
\newcommand{\IR}{\mathbb{R}}
\newcommand{\IQ}{\mathbb{Q}}
\newcommand{\IZ}{\mathbb{Z}}
\newcommand{\Gm}{\mathbb{G}_m}
\newcommand{\Gmu}{\pmb{\mu}}
\newcommand{\cI}{\mathcal{I}}
\newcommand{\cO}{\mathcal{O}}
\newcommand{\cX}{\mathcal{X}}
\newcommand{\be}{\mathbf{e}} 
\newcommand{\bw}{\mathbf{w}} 
\newcommand{\bv}{\mathbf{v}} 
\newcommand{\bu}{\mathbf{u}} 
\newcommand{\q}{\; / \;}
\newcommand{\qq}[1]{\; /_{#1} \;}
\newcommand{\mm}{\mathfrak{m}}
\definecolor{gruen}{rgb}{0, 0.55, 0}
\numberwithin{equation}{section}
\newcommand{\double}{\genfrac..{0pt}1
{\raise -1pt\hbox{$\scriptstyle\longrightarrow$}}{\raise 3pt\hbox
{$\scriptstyle\longrightarrow$}}}
\begin{document}

\title{Resolving plane curves using stack-theoretic blow-ups}

\subjclass[2020]{ 14H20, 14B05, 14M25,   14D23}

\keywords{Resolution of singularities, Artin stacks, curve singularities, weighted blow-ups.}

\author{Dan Abramovich}
\address{\tiny Department of Mathematics, Brown University, Box 1917,
	Providence, RI~02912, USA}
\email{dan\_abramovich@brown.edu}

\author{Ming Hao Quek}
\address{\tiny Harvard University, Department of Mathematics, 1 Oxford Street, Cambridge, MA 02138}
\email{mhquek@math.harvard.edu}

\author{Bernd Schober}
\address{\tiny  None
(Hamburg, Germany)}
\email{schober.math@gmail.com}

\date{\today}

\begin{abstract}
	
	Stack-theoretic blow-ups have proven to be efficient in resolving singularities over fields of characteristic zero.
	In this article, we move forward towards positive characteristic where new challenges arise. 
	In particular, the dimension of the tangent space of the Artin stack created after a weighted blow-up may increase, which makes it hard to apply  inductive arguments --- even if the maximal order decreases.
	We focus on the case of curve singularities embedded into a smooth surface defined over a perfect field.
	For this special situation we propose a solution to overcome the inductive challenge through canonically constructed multi-weighted blow-ups.

\end{abstract}

\maketitle

\setcounter{tocdepth}{1}
\tableofcontents

\section{Introduction}

This paper is concerned with \emph{resolution of singularities}, a challenging subject in algebraic geometry, especially when considered in positive and mixed characteristics. We focus however on the case of plane curves,  known since the early days of the subject, beginning with Newton. Many methods are described in \cite[Chapter 1]{Kollar}. A recent result on curves, with a similar spirit to the present paper, is \cite{FGM}.

A new method of resolution in charactertistic 0, using stack-theoretic weighted blow-ups, was introduced in \cite{ATW_weighted, McQuillan}, and one must wonder if this method has implications in positive characteristic, and what possible hurdles it encounters. 
Some of these questions are answered in \cite{Abramovich-Schober}, for binomial ideals, and in our work in preparation \cite{AQS}, where we construct an analogue of the weighted invariant of \cite{ATW_weighted} and describe its properties.

The purpose of this article is to identify this invariant for plane curves over a perfect field, apply the associated weighted blow-up of the plane, and see what happens. First we have:

\begin{Thm}
	\label{Thm:Main1} 
	Let $ C \subset S $ be a curve embedded in a regular surface $ S $ over a perfect field $k$.  
	Let $ q \in C $ be a singular closed point
	such that the reduction $ C_{\rm red} $ of $ C $ is singular at $ q $. 
	\begin{enumerate} 
	\item\label{It:well-defined} There is a well-defined, unique center $J=(x_1^{a_1}, x_2^{a_2})$ of maximal invariant $(a_1,a_2)$ admissible for $ C \subset S $ at $q$,
	where $ a_1 $ is the order of $ C$ at $ q $. 
	\item\label{It:birational} Let $\bar J = (x_1^{1/w_1}, x_2^{1/w_2})$ be the associated reduced center. Its stack-theoretic weighted blow-up $S' = Bl_{\bar J}(S)\to S$ is a proper birational morphism from a smooth Artin stack $S'$ which is a global quotient of a variety by $\IG_m$, in particular tame.
	\item\label{It:order-drops} The order of the proper transform at every point of the blow-up lying over $ q $ is strictly smaller than $ a_1 $.
	\end{enumerate}
\end{Thm}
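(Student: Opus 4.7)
The plan is to work locally at $q$ in the completion $R = \widehat{\cO}_{S,q} \cong \kappa(q)[[x_1, x_2]]$; since $k$ is perfect and $q$ is closed, $\kappa(q)/k$ is finite separable. Let $f \in R$ be a local defining equation of $C$; by hypothesis $a_1 := \ord_q(f) \geq 2$. For part~(\ref{It:well-defined}), given regular parameters $(x_1, x_2)$, I expand $f = \sum c_{ij} x_1^i x_2^j$ and set
\[
a_2(x_1, x_2) \;=\; \inf\Bigl\{\tfrac{j\, a_1}{a_1 - i} \;:\; c_{ij} \neq 0,\ i < a_1\Bigr\},
\]
so that $(x_1^{a_1}, x_2^{a_2(x_1,x_2)})$ is the largest center admissible for $f$ in these coordinates; geometrically this reads off the slope of the edge of the Newton polygon through $(a_1, 0)$. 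The infimum is finite because $C_{\rm red}$ is singular at $q$, which rules out the degenerate case where all monomials have $i \geq a_1$. I would show that $a_2 := \sup_{(x_1, x_2)} a_2(x_1, x_2)$ is attained by a Tschirnhaus-style argument: whenever $a_2(x_1, x_2)$ is non-maximal, a substitution $x_2 \mapsto x_2 + \lambda x_1^s$ (with $\lambda \in \kappa(q)$, $s \in \IZ_{>0}$) kills the lattice point on the lower edge nearest the $x_2$-axis, strictly raising the $x_2$-intercept; termination is then a finiteness statement on the Newton polygon. Well-definedness of the ideal $J$ reduces to checking that two coordinate systems attaining the maximum define the same weighted filtration on $R$, which follows from comparing their weighted initial forms.

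For part~(\ref{It:birational}), form the reduced center $\bar J = (x_1^{1/w_1}, x_2^{1/w_2})$ with $(w_1, w_2)$ coprime satisfying $w_1 a_1 = w_2 a_2 = \operatorname{lcm}(a_1, a_2)$. The stack-theoretic weighted blow-up is constructed as the Proj-stack of the Rees algebra of $\bar J$ following \cite{ATW_weighted}; locally at $q$ it is the global quotient
\[
S' \;=\; \bigl[\,(\Spec R[y_1, y_2, u] \setminus V(y_1, y_2)) \,/\, \Gm\,\bigr],
\]
with $\Gm$ acting by weights $(w_1, w_2, -1)$ and $x_i = u^{w_i} y_i$. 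Since $(x_1, x_2)$ is a regular system of parameters, $S'$ is smooth and tame, and $S' \to S$ is proper and birational with exceptional divisor $\IE \cong \IP(w_1, w_2)$.

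Part~(\ref{It:order-drops}) is the main content. On the chart above, admissibility gives $f = u^{a_1 w_1} \tilde f$, where $\tilde f$ is the proper transform. I would verify $\ord_{q'}(\tilde f) < a_1$ at every closed point $q' \in \IE$ by splitting into three cases: the two torus-fixed points $q'_1 = [1:0]$ and $q'_2 = [0:1]$, and the general points of $\IE$. At $q'_1$, the vertex $(a_1, 0)$ of the Newton polygon contributes a unit $y_1^{a_1}$-term while remaining monomials contribute positive powers of $y_2$ with exponents strictly less than $a_1$, so the order drops. The analysis at $q'_2$ is symmetric using the vertex $(0, a_2)$. At a general point of $\IE$, the proper transform restricts to a nontrivial polynomial on $\IE$ whose vanishing order is controlled by the weighted initial form $\ini_{(w_1, w_2)}(f)$ and is again strictly less than $a_1$. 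The main obstacle is the fixed point $q'_2$: ruling out $\ord_{q'_2}(\tilde f) = a_1$ requires the maximality of $a_2$ from~(\ref{It:well-defined}), since any such equality would back-trace to a coordinate change on $S$ producing an admissible center with larger $a_2$, a contradiction. In positive characteristic, care is needed so that no inseparable coefficient in the initial form blocks this back-tracing; perfectness of $k$ ensures that any such obstruction can be absorbed into a further Tschirnhaus substitution at $q$, closing the loop.
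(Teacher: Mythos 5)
Your overall route coincides with the paper's: part (\ref{It:well-defined}) via maximizing the Newton-polygon intercept by Tschirnhaus substitutions (this is exactly Hironaka's vertex-solving for the characteristic polyhedron), part (\ref{It:birational}) via the $\IG_m$-quotient of the extended Rees algebra, and part (\ref{It:order-drops}) via the observation that the weighted initial form $f_{J,1}$ cannot be a pure $a_1$-th power of a parameter because that would contradict maximality of $a_2$ (the paper's Corollary \ref{Cor:leading-term}). However, there are two concrete problems. First, your substitution is in the wrong variable: with your normalization the initial form is (a unit times) $x_1^{a_1}$, and the intercept $a_2$ is \emph{independent} of the choice of $x_2$ (this is implicit in Definition \ref{Def:char_poly} and the proof of Theorem \ref{Thm:J_unique}, where $x_2$ is arbitrary subject to $(x_1,x_2)=\mm$). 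A substitution $x_2 \mapsto x_2 + \lambda x_1^{s}$ only pushes monomials toward the $x_1$-axis and cannot kill the interior lattice points of the edge; the vertex-solving move must be $x_1 \mapsto x_1 - \lambda x_2^{s}$, i.e., one modifies the parameter whose $a_1$-th power is the initial form. As written, your maximization step does not raise $a_2$. (Your termination claim is fine once this is corrected: the successive intercepts are rationals with denominators in $\{1,\dots,a_1-1\}$, increasing and bounded by the finite value $\delta\nu$, finiteness of $\delta$ being exactly where singularity of $C_{\rm red}$ enters.)

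Second, the well-definedness of $J$ as a valuation --- the content of ``unique center'' in (\ref{It:well-defined}) --- is dismissed in one line (``follows from comparing their weighted initial forms''), but this is the delicate part of the paper's Theorem \ref{Thm:J_unique}: when $a_1<a_2$ two maximizing coordinate systems $(x_1,x_2)$ and $(y_1,y_2)$ genuinely differ, and one must show the monomial valuations agree. The paper does this by first arranging a common second parameter (using that $\IP(\mm/\mm^2)$ has at least $3$ points), then proving $v_{J_y}(x_1)=1/a_1$ by a non-Archimedean estimate showing that otherwise the single term $d_{a_1,0}x_1^{a_1}$ would force $v_{J_y}(f)<1$, contradicting admissibility. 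Some version of this argument is needed; comparing initial forms alone does not produce it. A minor further point: your analysis at $q_1'$ assumes $x_1^{a_1}$ occurs with nonzero coefficient, which holds when $a_1<a_2$ but not necessarily when $\delta=1$; the paper's formulation via ``$f_{J,1}$ is not a pure $\nu$-th power away from the origin'' covers all cases uniformly. Your instinct that perfectness of $k$ is what makes the back-tracing at $q_2'$ work is correct --- it is exactly the solvability condition for the vertex.
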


The  invariant entry $ a_2 $, which we use to define the center,  is derived from Hironaka's characteristic polyhedron (Section~\ref{Sec:Hiro}).

For further discussion in the introduction, we mention that we have $ a_1 \leq a_2 $ and hence $ w_1 \geq w_2 $ 
and $ w_1 , w_2 $ are coprime integers.

We note that Statement \ref{It:order-drops} is not true when using standard blow-ups, an advantage in any characteristic.
It also suggests that any reasonable singularity invariant drops.\footnote{Strictly speaking,  we can only speak of a full invariant, and prove this, once \cite{AQS} is completed.} 
This looks promising for an inductive argument for resolution, and works when the characteristic does not intervene, see Theorem \ref{Thm:Main2}\eqref{It:invariant-drops-nop}.

However, when the characteristic does intervene, one cannot proceed inductively precisely because, in general, the resulting two-dimensional Artin stack modifying $S$ has tangent space of dimension 3. This would require a treatment in arbitrary embedding dimension, which at present we do not have, even in the cases at hand.

We propose a remedy here which replaces the Artin stack weighted blow-up by a Deligne--Mumford multi-weighted blow-up, and which has all desired properties.

\begin{Thm}
	\label{Thm:Main2} 	 
	With the assumptions of Theorem \ref{Thm:Main1}, let $p$ be the characteristic of $k$.
	\begin{enumerate} 
	\item\label{It:invariant-drops-nop} If $p\,{\nmid}\, w_1w_2$ the blow-up is a Deligne--Mumford stack
	with everywhere smaller invariants: $\inv_C'(q')<(a_1,a_2)$ for every point $q'$ on the proper transform $C'$ lying over $q$.
	\item\label{It:invariant-drops-p} If $p\,|\,w_1w_2$ there is a canonical multi-weighted blow-up $S'' \to S$, 
	a proper birational map from a smooth Deligne--Mumford stack with
	strictly smaller order at every point lying above $ q $ 
		and
	everywhere smaller invariant as above. 
\end{enumerate}
Iterating $S'' \to S$, one obtains in finitely many steps a resolution of singularities by tame Deligne--Mumford stacks. An application of Bergh's destackification \cite{Bergh} provides a further stack-theoretic modification  $S^{(3)} \to S''$ whose coarse moduli space $\bar S^{(3)}$ is smooth, inducing a resolution $\bar S^{(3)} \to S$ by a variety.
\end{Thm}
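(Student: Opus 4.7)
The plan is to treat the tame case (Part \eqref{It:invariant-drops-nop}), then construct the multi-weighted blow-up for the wild case (Part \eqref{It:invariant-drops-p}), then prove termination, and finally invoke Bergh. For Part \eqref{It:invariant-drops-nop} I would begin with the local toric description of $S' = Bl_{\bar J}(S)$ around a point above $q$ as a quotient of a smooth affine chart by $\Gm$ acting with weights $(w_1, w_2)$; the stabilizers of the two exceptional points are then subgroup schemes of $\Gmu_{w_1}$ and $\Gmu_{w_2}$. Under the hypothesis $p \nmid w_1 w_2$ these are étale, so $S'$ is tame Deligne--Mumford. The invariant-drop is immediate from Theorem \ref{Thm:Main1}\eqref{It:order-drops}: the first coordinate of $\inv$ is the order $a_1$, which strictly drops at every point of $C'$ above $q$, so the lexicographic inequality $\inv_{C'}(q') < (a_1, a_2)$ holds automatically.

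For Part \eqref{It:invariant-drops-p}, the plan is to construct $S''$ as a canonical refinement of $S'$ that eliminates the wild $\Gmu_p$-stabilizers. Writing $w_i = p^{e_i} \tilde w_i$ with $p \nmid \tilde w_i$, I would produce $S''$ as a toric refinement whose presentation is a quotient of a smooth affine scheme by a higher-rank torus $\Gm^r$, arranged so that the lattice of weights has elementary divisors coprime to $p$. Every stabilizer is then a finite étale group scheme of order coprime to $p$, so $S''$ is tame Deligne--Mumford. Canonicity will follow because the construction depends only on the weights $(w_1, w_2)$ and on the Newton-polyhedron data of Section~\ref{Sec:Hiro}, both of which are canonical. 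Since $S'' \to S$ factors locally through $S' \to S$ by a toric refinement that is an isomorphism outside the exceptional divisor, the order-drop of Theorem \ref{Thm:Main1}\eqref{It:order-drops} transfers to $S''$, and the lexicographic invariant-drop follows as in the tame case.

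For iteration, the invariant $\inv$ takes values in the well-ordered set $\IZ_{\geq 0} \times \IQ_{\geq 0}$ under the lexicographic order; strict descent at every point above the center then guarantees that repeatedly applying the modification $S \rightsquigarrow S''$ reaches, in finitely many steps, a tame smooth Deligne--Mumford stack on which the proper transform of $C_{\rm red}$ is regular everywhere. Bergh's destackification theorem \cite{Bergh} applied to this smooth tame DM stack produces the further stack-theoretic modification $S^{(3)}$ with smooth coarse moduli space $\bar S^{(3)}$, yielding the desired variety-level resolution $\bar S^{(3)} \to S$.

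The hardest step will be the construction in Part \eqref{It:invariant-drops-p}: one must exhibit a canonical multi-weighted blow-up that realizes the prescribed weights $(w_1, w_2)$ while (a) being tame Deligne--Mumford, and (b) preserving the order-drop of Theorem \ref{Thm:Main1}\eqref{It:order-drops}. In particular, the verification that the Newton-polyhedron argument underlying the order-drop still applies after replacing the $\Gm$-quotient by a $\Gm^r$-quotient is delicate precisely in the wild stratum where $p \mid w_1 w_2$, and the independence of the resulting stack from the choice of local coordinates used to present $\bar J$ will require a careful invariance statement for the refined fan.
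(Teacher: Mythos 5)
Your outline of Part \eqref{It:invariant-drops-nop} and of the final iteration-plus-destackification step matches the paper, and those portions are essentially fine (one small caveat: $\IZ_{\geq 0}\times\IQ_{\geq 0}$ with the lexicographic order is \emph{not} well-ordered --- the paper notes that the denominator of $a_2$ is bounded by $a_1-1$, though for termination the strict drop of the integer-valued order already suffices). The problem is Part \eqref{It:invariant-drops-p}, which you correctly identify as the hardest step but then do not actually carry out. Two concrete gaps. First, the construction: saying the refinement is ``arranged so that the lattice of weights has elementary divisors coprime to $p$'' is the desired conclusion, not a construction. The paper exhibits a specific single-ray subdivision of the fan $\langle \be_1,\bw\rangle \cup \langle \be_2,\bw\rangle$ by $\bu=(1,\kappa)$ with $\kappa=\lceil w_2/w_1\rceil$ (for $p\mid w_1$), and checks by hand that the three resulting charts have stabilizers $\Gmu_{w_2}$, $\Gmu_{w_2-\kappa w_1}$, and the trivial group, all of order prime to $p$. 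Your factorization $w_i=p^{e_i}\tilde w_i$ plays no role and does not by itself produce a fan. Canonicity likewise is not a consequence of ``depending only on the weights'': the paper must identify the new Rees algebra with the blow-up of a coordinate-independent integrally closed ideal, namely $I'=(x_1^{w_2+\kappa},x_1^{w_2}x_2,x_2^{w_1+1})^{\Int}=\{g: v_{\bar J}(g)\geq w_1w_2+w_2\}$, whose dual fan is $\Sigma'$.

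Second, and more seriously, the claim that ``the order-drop of Theorem \ref{Thm:Main1}\eqref{It:order-drops} transfers to $S''$'' because $S''\to S$ factors through a toric refinement is unjustified and cannot be taken for granted. The order is functorial for \emph{smooth} morphisms, and the modification $S''\to S'$ is not smooth; moreover the very point of the construction is that the problematic chart of $S'$ is only presented as a $\Gm$-quotient of a three-dimensional scheme, so there is no two-dimensional order on $S'$ there to transfer from. The new fan has a new torus-fixed point (the point $s=u=0$ in the chart $x_1'x_2'\neq 0$, lying on the intersection of the two exceptional divisors), and the order there must be bounded by a fresh computation. The paper does this using the $\bu$-initial form $f_{\bu}$: when $a_1<a_2$ the monomial $x_1^{a_1}$ contributes a term $u^{a_1-\nu_{\bu}(f)}$ with no $s$-factor, and when $a_1>a_2$ one needs the inequalities $\nu_{\bu}(f)>a_1$ and $\kappa a_2-\nu_{\bu}(f)<a_2$, which rest on the specific choice $\kappa=\lceil w_2/w_1\rceil$. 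Without this computation the theorem is not proved; with a different (unspecified) refinement it might even fail.
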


\medskip

Let us briefly outline the idea for the construction of the multi-weighted blow-up in Theorem~\ref{Thm:Main2}(\ref{It:invariant-drops-p}) in the setting $ S = \IA_\K^2 $.
Here, the (multi-)weighted blow-up corresponds to a fantastack as explained in \cite[Example~2.10]{AQ}, see 
Construction~\ref{Con:multi-weighted-blow-up}.
The construction of a fantastack arises in our setting from a full fan in $ \IR_{\geq 0}^2 $. 
The weighted blow-up with reduced center $\bar J = (x_1^{1/w_1}, x_2^{1/w_2})$ corresponds to the fan with rays $ \be_1 = (1,0), \be_2 = (0,1) $ and $ \bw = (w_1,w_2) $.
The latter is illustrated in the following picture:
\[
\begin{tikzpicture}[scale=0.8]
	
	\filldraw[gray] (0,0)--(5.25,3.5)--(0,3.5);
	\filldraw[lightgray] (0,0)--(5.25,3.5)--(5.25,0)--(0,0);
	
	\draw[->, thick] (0,0)--(0,4.1); 
	\draw[->, thick] (0,0)--(6,0); 
	
	\node at (6.6,-0.1) {$\langle \be_1 \rangle$};
	\node at (-0.7,4.1) {$\langle \be_2 \rangle$};
	\node at (5.7,3.7) {$\langle \bw \rangle$};
	
	\foreach \x in {1,...,5} 
	\draw (\x,0.1) -- (\x,-0.1);
	
	\foreach \y in {1,...,3} 
	\draw (0.1,\y) -- (-0.1,\y);
	
	
	\foreach \x in {1,...,5}
	\draw[dotted] (\x,0) -- (\x,4.1);
	\foreach \y in {1,...,3}
	\draw[dotted] (0,\y) -- (6,\y);
	
	\draw[ultra thick, ->] (0,0)--(5.25,3.5);
			
		\filldraw (3,2) circle (2pt);

\end{tikzpicture} 
\] 
The blow-up is covered by two charts which are determined by two neighboring rays. 
The challenge of having an Artin stack which is not a Deligne--Mumford stack arises if the determinant of the $ 2 \times 2 $ matrix given by the minimal generators of the rays is zero in $ \K $.
In other words, if either 
\[
	\det \begin{pmatrix}
		1 &  w_1 \\ 0 & w_2 
	\end{pmatrix}
 = w_2 = 0 \mod p 
 \hspace{1cm}
 \mbox { or } 
 \hspace{1cm}
 \det \begin{pmatrix}
 	w_1 & 0  \\ w_2 & 1  
 \end{pmatrix}
 = w_1 = 0 \mod p .
\]
This explains the condition $p\,{\nmid}\, w_1w_2$, resp.~$ p\,|\,w_1w_2 $, in Theorem~\ref{Thm:Main2}.

The idea for the construction of the multi-weighted blow-up in Theorem~\ref{Thm:Main2}\eqref{It:invariant-drops-p} in the given special case is to refine the fan by introducing an additional ray, so that none of the relevant determinants is zero in $k$:

If $ w_1 \equiv 0 \mod p $, we introduce the ray determined by the vector
\(
		\bu := (1,1). 
\)
We get  the picture:
\[
	\begin{tikzpicture}[scale=0.8]
		
		\filldraw[lightgray] (10,0)--(13.5,3.5)--(10,3.5)--(10,0);
		\filldraw[gray] (10,0)--(13.5,3.5)--(15.25,3.5)--(10,0);
		\filldraw[lightgray] (10,0)--(15.25,3.5)--(15.25,0)--(10,0);
		
		\draw[->, thick] (10,0)--(10,4.1); 
		\draw[->, thick] (10,0)--(16,0); 
		
		\node at (16.6,-0.1) {$\langle \be_1 \rangle$};
		\node at (9.3,4.1) {$\langle \be_2 \rangle$};
		\node at (15.7,3.7) {$\langle \bw \rangle$};
		\node at (13.8,3.9) {$\langle \bu \rangle$};
		
		\foreach \x in {11,...,15} 
		\draw (\x,0.1) -- (\x,-0.1);
		
		\foreach \y in {1,...,3} 
		\draw (10.1,\y) -- (9.9,\y);
		
		
		\foreach \x in {11,...,15}
		\draw[dotted] (\x,0) -- (\x,4.1);
		\foreach \y in {1,...,3}
		\draw[dotted] (10,\y) -- (16,\y);
		
		\draw[ultra thick, ->] (10,0)--(13.5,3.5);
		\draw[ultra thick, ->] (10,0)--(15.25,3.5);
				
		\filldraw (13,2) circle (2pt);
		\filldraw (11,1) circle (2pt);

	\end{tikzpicture} 
\]
We observe that the respective determinants are
\[
	\det \begin{pmatrix}
	1 &  w_1 \\ 0 & w_2 
\end{pmatrix} = w_2,
	\ \
	\det \begin{pmatrix}
		w_1 & 1  \\ w_2 & 1  
	\end{pmatrix}
	=  w_1 - w_2 , 
	\ \ 
		\det \begin{pmatrix}
		1 &  0 \\ 1 & 1 
	\end{pmatrix}
	= 1
\]
and all of them are non-zero modulo $ p $ since since $ p\nmid\ w_2$  and $ p\, |\, w_1$.

On the other hand, if $ w_2 \equiv 0 \mod p $,
we apply an analogous but more tricky construction,
by taking into account that the roles of $ w_1 $ and $ w_2 $ are switched.
We introduce
\[
\widetilde\bu := (\kappa,1), \qquad \kappa := \lceil w_1/w_2 \rceil \geq 1.
\]

\[
	\begin{tikzpicture}[scale=0.8]
		
		\filldraw[lightgray] (10,0)--(15.25,3.5)--(10,3.5)--(10,0);
		\filldraw[gray] (10,0)--(15.25,2.625)--(15.25,3.5)--(10,0);
		\filldraw[lightgray] (10,0)--(15.25,2.625)--(15.25,0)--(10,0);
		
		\draw[->, thick] (10,0)--(10,4.1); 
		\draw[->, thick] (10,0)--(16,0); 
		
		\node at (16.6,-0.1) {$\langle \be_1 \rangle$};
		\node at (9.3,4.1) {$\langle \be_2 \rangle$};
		\node at (15.7,3.7) {$\langle \bw \rangle$};
		\node at (15.7,2.6) {$\langle \widetilde\bu \rangle$};
		
		\foreach \x in {11,...,15} 
		\draw (\x,0.1) -- (\x,-0.1);
		
		\foreach \y in {1,...,3} 
		\draw (10.1,\y) -- (9.9,\y);
		
		
		\foreach \x in {11,...,15}
		\draw[dotted] (\x,0) -- (\x,4.1);
		\foreach \y in {1,...,3}
		\draw[dotted] (10,\y) -- (16,\y);
		
		\draw[ultra thick, ->] (10,0)--(15.25,3.5);
		\draw[ultra thick, ->] (10,0)--(15.25,2.625);
		
		\filldraw (13,2) circle (2pt);
		\filldraw (12,1) circle (2pt);
	
	\end{tikzpicture} 
\]

We note that in the previous case $\lceil w_2/w_1 \rceil = 1$, so the two cases are consistent.
In this case we observe that the respective determinants are
\[
	\det \begin{pmatrix}
	w_1 & 0 \\  w_2 & 1 
\end{pmatrix} = w_1,
	\ \
	\det \begin{pmatrix}
		\kappa &w_1   \\ 1 &w_2   
	\end{pmatrix}
	=  \kappa w_2 - w_1 , 
	\ \ 
		\det \begin{pmatrix}
		1 &  \kappa \\ 0 & 1 
	\end{pmatrix}
	= 1
\]
and all of them are non-zero modulo $ p $ since $ p\nmid\ w_1$   and $ p\, |\, w_2$.

\begin{Rk}[{Non-embedded resolution}] If one is only interested in resolving $C$ and not keeping it embedded, some of the computations become simpler. As Example \ref{Ex:one} suggests, in the first chart the proper transform of the curve does not pass through the point on the blow-up of $S$ where the stabilizer is non-trivial. However this avoids only the easy part of our construction --- for the other chart we need the full force of multi-weighted blow-ups.
\end{Rk}

\subsection*{Outline of the article}
In Section \ref{Sec:Hiro} we review Hironaka's characteristic polyhedron in the case of plane curves. This leads to the choice of parameters and the definition of invairants in Section \ref{Sec:inv}. The weighted center is introduced in Section \ref{Sec:center}, its properties proven in Theorem \ref{Thm:J_unique}, implying Theorem \ref{Thm:Main1}\eqref{It:well-defined}. The weighted blow-up is introduced in Section \ref{Sec:weighted}, and shown to be birational and tame, proving Theorem \ref{Thm:Main1}\eqref{It:birational}. The transform of $C$ under the weighted blow-up is discussed   in Section \ref{Sec:transform}, and we show that the order drops, proving Theorem \ref{Thm:Main1}\eqref{It:order-drops}, as well as  Theorem \ref{Thm:Main2}\eqref{It:invariant-drops-nop}. The multi-weighted blow-up is introduced in Section \ref{Sec:multi}, in particular proving that it is a well-defined Deligne--Mumford stack. Theorem \ref{Thm:Main2}\eqref{It:invariant-drops-p} is completed in Section \ref{Sec:drops}.

\subsection*{Acknowledgements}

This research work was supported in part by funds from BSF grant 2022230, NSF grants DMS-2100548, DMS-2401358, and Simons Foundation Fellowship SFI-MPS-SFM-00006274.
Dan Abramovich thanks IHES  for its hospitality during Fall 2024.

We thank Andr\'e Belotto da Silva, Ang\'elica Benito, Raymond van Bommel, Ana Bravo, Vincent Cossart, Santiago Encinas, Mike Montoro, James Myer,  Michael Temkin and Jaros{\l}aw W{\l}odarczyk for discussions on the topic.

\section{Hironaka's characteristic polyhedron}
\label{Sec:Hiro}

In this section, we recall the notion of Hironaka's characteristic polyhedron \cite{HiroCharPoly} in our particular setting of plane curves. 
For the reader's convenience, we try to avoid technicalities as much as possible. 
Aside from \cite{HiroCharPoly}, other references discussing Hironaka's characteristic polyhedron in full generality are \cite[Chapter~8 and its appendix]{CJS} or \cite[Section~1]{CS-Compl}, for example.

\medskip

Let $ C \subset S $ be a curve embedded in a regular surface $ S $.  
Let $ q \in C $ be a singular closed point
such that the reduction $ C_{\rm red} $ of $ C $ is singular at $ q $. 
We consider the local situation at $ q $.
Let $ \cO := \cO_{S,q} $ be the regular local ring of $ S $ at $ q $
with maximal ideal $ \mm \subset \cO $ and residue field $ \K \simeq \cO/\mm $.
Let $ (x,y) $ be a regular system of parameters for $ \cO $. 
We assume that $ \cO $ contains the residue field $ \K $ and that $ \K $ is perfect.
The situation reduces to a hypersurface singularity $ \{ f = 0 \} $ given by an element $ f \in \mm^2 $.
In the $ \mm $-adic completion $ \widehat{\cO} \simeq \K[[x,y]] $ of $ \cO $, $ f $ has an expansion%
\footnote{In fact, there exists a {\em finite} expansion in $ \cO $ , 
	$ f = \sum_{(a,b)} \epsilon_{a,b} x^a y^b $ with $ \epsilon_{a,b} \in \cO^\times \cup \{0\} $, by Noetherianity of $ \cO $, see \cite[Proposition~2.1]{Cossart-Pitant} for example.
All notions that we define in the following can be introduced using this expansion and thus do not depend on passing to the completion unless otherwise specified.
For the reader's convenience, we stick to an expansion with coefficients in $ \K $.}
\[
	f = \sum_{(a,b) \in \IZ_\gqz^2} c_{a,b} x^a y^b,  
	\ \  \mbox{ with } c_{a,b} \in \K .
\]
In this case, the multiplicity of $ C $ at $ q $ (resp., of $ f $ at $ \mm $) is given by 
\[
	\nu := \nu_q (C) := \nu_\mm(f) := \inf \{ a+ b \mid c_{a,b} \neq 0  \} .
\]
Since $ C $ is singular at $ q $, we have $ \nu \geq 2 $.

The {\em initial form of $ f $ at $ \mm $} is defined as
\[
	\ini_\mm (f) := \sum_{(a,b) \,:\, a+b = \nu} c_{a,b} X^a Y^b \in \operatorname{gr}_\mm(\cO) := \bigoplus_{s \geq 0} \mm^s/ \mm^{s+1} \simeq \K [ X, Y],
\]
where where $ X $ (resp., $ Y $) denotes the image of $ x $ (resp., $ y $) in $ \mm/\mm^2 $.
In the following, we assume that there exist $ \lambda, \mu \in \K $ such that  
\begin{equation}
	\label{eq:dir_dim_one}
	\ini_\mm (f) = ( \lambda X + \mu Y )^\nu . 
\end{equation}
The case in which \eqref{eq:dir_dim_one} is not true will be discussed in Remark~\ref{Rk:dir_dim_zero}.
After a linear coordinate change and possibly renaming of the variables, 
we may suppose without loss of generality 
that $ \lambda = 0 $ and $ \mu = 1 $. 
In other words, the assumption \eqref{eq:dir_dim_one} becomes
\[
	\ini_\mm(f) = Y^\nu .
\]

\begin{Def}\label{Def:projected polyhedron}
	In the given setup, the {\em projected polyhedron $ \Delta(f;x;y) $ of $ (f;x;y) $} is defined as 
	the smallest closed convex set containing all points of the set
	\[
		\left\{ \frac{a}{\nu - b} + \IR_\gqz  \ \Big| \  c_{a,b} \neq 0 \right\}. 
	\]
	In other words,  $ \Delta(f;x;y) $ is the semi-line $ [ v ; \infty ) $ starting at the vertex $ v := v(f;x;y) := \inf\{ \frac{a}{\nu - b} \mid c_{a,b} \neq 0  \} $ and going to infinity. 
\end{Def}

The name projected polyhedron is coming from the fact that it is a projection of the Newton polyhedron, which is the two-dimensional polyhedron determined as the closed convex hull of the set $ \{ (a,b) + \IR_\gqz^2 \mid c_{a,b} \neq 0 \} $, from the point $ (0, \nu) $ to the $ x $-axis, followed by a homothety with factor $ 1/\nu $.

\begin{Obs}
	\label{Obs:Proj_poly_non-empty}
	Notice that $ \Delta(f;x;y) $ is non-empty:
	In the empty case, we must have $ f = \epsilon y^\nu $ for some unit $ \epsilon \in \cO $,
	which is a contradiction to
	the assumption that $ C_{\rm red} $ is singular at $ q $.  
	
	Moreover, 
	by the assumption $ \ini_\mm(f) = Y^\nu $, 
	we have that the vertex of $ \Delta(f;x;y) $ is a positive rational number strictly bigger than $ 1 $, i.e., the vertex {$ v > 1 $.} 
\end{Obs}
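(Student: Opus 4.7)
The plan is to treat the two assertions separately, each by a direct unpacking of the definitions combined with the hypothesis $\ini_\mm(f) = Y^\nu$.

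For the non-emptiness claim, I would argue by contrapositive. Suppose $\Delta(f;x;y) = \emptyset$. Then, by Definition~\ref{Def:projected polyhedron}, there is no $(a,b)$ with $c_{a,b} \neq 0$ and $b < \nu$; combined with the multiplicity condition $a + b \geq \nu$ for every nonzero term, this forces every monomial appearing in the expansion of $f$ to be divisible by $y^\nu$, i.e., $f = y^\nu \cdot g$ in $\widehat{\cO} \simeq \K[[x,y]]$. The hypothesis $\ini_\mm(f) = Y^\nu$ then forces the initial form of $g$ to equal $1$, so $g$ is a unit; writing $\epsilon := g$, we obtain $f = \epsilon\,y^\nu$ for some $\epsilon \in \widehat{\cO}^\times$. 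Consequently $C_{\rm red}$ is cut out by $y$ and is regular at $q$, contradicting the assumption that $C_{\rm red}$ is singular at $q$.

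For the bound $v > 1$, it suffices to show that $a + b > \nu$ strictly for every $(a,b)$ with $c_{a,b} \neq 0$ and $b < \nu$, since $\frac{a}{\nu - b} > 1$ is equivalent to this inequality. The multiplicity condition already gives $a + b \geq \nu$, so only the boundary case $a + b = \nu$ needs to be excluded. But any such $(a,b)$ contributes to the initial form at $\mm$, and since $\ini_\mm(f) = Y^\nu$ the unique possibility is $(a,b) = (0,\nu)$ --- which has $b = \nu$ and hence does not appear in the indexing set of $\Delta(f;x;y)$. Therefore every contributing $(a,b)$ satisfies $a + b > \nu$, and in fact $\frac{a}{\nu - b} \geq 1 + \frac{1}{\nu - b} \geq 1 + \frac{1}{\nu}$, yielding $v \geq 1 + \frac{1}{\nu} > 1$. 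Positivity and rationality of $v$ come for free, since the infimum is actually a minimum: the denominator $\nu - b$ takes only finitely many values in $\{1,\ldots,\nu\}$, and for each such $b$ the minimal admissible $a$ with $c_{a,b}\neq 0$ exists.

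I do not anticipate a substantive obstacle: the statements are essentially a translation of the hypothesis on the initial form into polyhedral language. The one mildly delicate point is that the factorization $f = y^\nu g$ a priori takes place in $\widehat{\cO}$ rather than in $\cO$, but regularity of $C_{\rm red}$ at $q$ is faithfully detected on the completion, so this is harmless; alternatively, the footnote in the text guarantees a finite expansion of $f$ already in $\cO$, and the argument may be carried out there verbatim.
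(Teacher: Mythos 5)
Your proposal is correct and follows the same route as the paper's own (very terse) justification: emptiness forces $f=\epsilon y^\nu$ with $\epsilon$ a unit, contradicting singularity of $C_{\rm red}$, and the hypothesis $\ini_\mm(f)=Y^\nu$ excludes the boundary case $a+b=\nu$ so that every projected point exceeds $1$. Your additional observations --- that the infimum is attained and satisfies the explicit bound $v\geq 1+\tfrac1\nu$, and that the factorization can be read off either in $\widehat{\cO}$ or via the finite expansion in $\cO$ --- are correct refinements of details the paper leaves implicit.
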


The vertex $ v \in \Delta (f;x;y) $ is called {\em solvable} if there exists some $ \lambda \in \K \smallsetminus \{ 0 \} $ such that,
if we set $ y' := y - \lambda x^v $, then $ v \notin \Delta(f;x;y') $. 
Notice that this implies $  \Delta(f;x;y) \supsetneq  \Delta(f;x;y') $.

\begin{Ex}
	Let $ \K= \mathbb{F}_3 $ and $ f = -y^{9} + y^3 - x^6 + x^{11} $.
	We see that $ \Delta(f;x;y ) $ is the semi-line with vertex $ v = 2 $.
	The vertex $ v $ is solvable:
	if we introduce $ y' := y - x^2 $,
	then $ f = -y'^9 + y'^3 - x^{18} + x^{11} $. 
	In particular, $ \Delta(f;x;y' ) $ is the semi-line with vertex $ v'= 11/3 $. 
	Observe that $ v' $ is not solvable. 
\end{Ex}

\begin{Def}
	\label{Def:char_poly}
	Let $ f \in \cO $
	with $ \ini_\mm(f) = Y^\nu $ for $ \nu := \nu_\mm(f) $.
	The {\em characteristic polyhedron $ \Delta(f;x) $ of $ (f;x) $} is defined as the intersection over all projected polyhedra $ \Delta(f;x;y') $ with $ y' $ varying through all choices such that $ (x,y') $ is a regular system of parameters (r.s.p.) for $ \cO $,
	\[
		\Delta(f;x) = \bigcap_{y' \,:\, (x,y')\ \rm r.s.p.} \Delta(f;x;y').  
	\]
\end{Def}

Starting with $ \Delta(f;x;y) $, 
we may verify whether its vertex $ v $ is solvable. 
If it is so, we modify $ y $ appropriately and check whether the new vertex (which is strictly bigger than $ v $) is also solvable. 
By \cite[Theorem~(4.8)]{HiroCharPoly}, repeated solving of vertices leads to an element $ \widehat{y} \in \widehat{\cO} $ in the $ \mm $-adic completion $ \widehat{\cO} $ of $ \cO $ such that 
$
	\Delta(f;x;\widehat{y}) = \Delta(f;x).
$
Since $ \dim ( \Spec (R/ ( f ) ) ) = 1 $, 
\cite[Theorem~B]{CS-Compl} implies that it is not required to pass to the completion:

\begin{Prop}
	\label{Prop:char_poly_in_R}
	Let the setup be as in Definition~\ref{Def:char_poly}.
	There exists an element $ z \in \cO $ such that $ (x,z) $ is a regular system of parameters for $ \cO $ and  
	\[
	\Delta(f;x;z) = \Delta(f;x).
	\]
\end{Prop}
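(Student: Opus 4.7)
The plan is to view this statement as a refinement of Hironaka's classical theorem that avoids passing to the $\mm$-adic completion, and to reduce it essentially to the finiteness result of Cossart--Schober. Starting from any $y \in \cO$ with $(x,y)$ a regular system of parameters, I iteratively solve the unique vertex of the current projected polyhedron: at step $n$, if the vertex $v_n$ of $\Delta(f;x;y_n)$ is solvable via some $\lambda_n \in \K \smallsetminus \{0\}$, I replace $y_n$ by $y_{n+1} := y_n - \lambda_n x^{v_n}$; otherwise I stop. Note that solvability forces $v_n$ to be a non-negative integer, so that $x^{v_n} \in \cO$ and the step stays inside $\cO$. Each such step strictly enlarges the vertex, and the $\mm$-adic Cauchy sequence $(y_n)$ converges to some $\widehat{y} \in \widehat{\cO}$ realizing $\Delta(f;x;\widehat{y}) = \Delta(f;x)$ by \cite[Theorem~(4.8)]{HiroCharPoly}.

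What remains is to show that in our setting, where $\dim(\cO/(f)) = 1$, the iteration above terminates after finitely many steps. Once termination is established at some step $N$, the element $z := y_N$ lies in $\cO$; since every increment $y_{n+1} - y_n = -\lambda_n x^{v_n}$ belongs to $\mm$ (indeed, to $\mm^2$ because $v_n > 1$ by Observation~\ref{Obs:Proj_poly_non-empty}), the pair $(x,z)$ remains a regular system of parameters of $\cO$, and by construction $\Delta(f;x;z) = \Delta(f;x;\widehat{y}) = \Delta(f;x)$.

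The termination in the one-dimensional case is exactly the content of \cite[Theorem~B]{CS-Compl}, which I invoke directly; this absorbs the main obstacle of the proof. Heuristically, the one-dimensionality of $\Spec(\cO/(f))$ forces the successive vertices $v_n$ to stabilize at a rational value whose denominator is bounded in terms of $\nu$, after which no solving is possible and the algorithm halts. The hardest part of the argument is therefore not in the bookkeeping above but in this finiteness statement, which rests on a careful analysis in \cite{CS-Compl} of how the coefficients of an expansion of $f$ interact with iterated vertex-solving in low dimension.
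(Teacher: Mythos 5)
Your argument is essentially the paper's own: it likewise obtains $\widehat{y}\in\widehat{\cO}$ by iterated vertex-solving via \cite[Theorem~(4.8)]{HiroCharPoly} and then invokes \cite[Theorem~B]{CS-Compl} in the one-dimensional case to conclude that passing to the completion is unnecessary. The only caveat is that the cited Theorem~B is an existence statement (some $z\in\cO$ with $(x,z)$ a regular system of parameters achieves $\Delta(f;x)$) rather than literally a termination statement for your specific iteration, but since you defer the substance to that reference exactly as the paper does, this does not affect correctness.
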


In particular, the characteristic polyhedron $ \Delta(f;x) $ is non-empty, by Observation~\ref{Obs:Proj_poly_non-empty}. 
We denote by $ \delta := \delta(f;x) $ the vertex of the characteristic polyhedron, 
i.e.,
\[
	 \Delta(f;x) = [ \delta; \infty ) . 
\]

From its construction, it seems as if $ \delta $ depends on the embedding $ (f) \subset \cO $, 
but in fact, \cite[Corollary~B(3)]{CJSch} (see also \cite[Theorem~4.18]{CS-dim2})
provides:

\begin{Prop}
	\label{Prop:delta_invariant}
	The number $ \delta \in \IQ_{>1} $ is an invariant of the singularity $ \Spec (\cO/ (f) ) $.
\end{Prop}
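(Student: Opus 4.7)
The plan is to prove invariance of $\delta$ in three steps, following the strategy of Cossart--Schober \cite{CS-dim2} adapted to the plane-curve situation.

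First, I would verify invariance under unit multiples of $f$. If $f' = u f$ with $u \in \cO^\times$ having constant term $u_0 \in \K^\times$, then $\nu_\mm(f') = \nu_\mm(f)$ and $\ini_\mm(f') = u_0 Y^\nu$, so (after rescaling) the standing hypothesis $\ini_\mm(f) = Y^\nu$ is preserved. Expanding $uf$ shows that every non-zero coefficient of $uf$ at exponent $(a,b)$ arises from a non-zero coefficient of $f$ at some $(a_0,b_0)$ with $(a - a_0, b - b_0) \in \IZ_\gqz^2$, yielding $\Delta(f';x;y') \subseteq \Delta(f;x;y')$ for every $y'$ such that $(x,y')$ is an r.s.p.; the reverse inclusion follows symmetrically using $u^{-1}$. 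Intersecting over $y'$ gives $\Delta(f';x) = \Delta(f;x)$.

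Second, invariance under the choice of $y$ is built into the definition, since $\Delta(f;x)$ is by construction the intersection of $\Delta(f;x;y')$ over all $y'$ such that $(x,y')$ is an r.s.p.

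The main step is invariance under the choice of $x$. By Proposition~\ref{Prop:char_poly_in_R}, I would fix $z$, $z'$ such that $(x,z)$, $(x',z')$ are r.s.p.\ realizing $\Delta(f;x;z) = \Delta(f;x)$ and $\Delta(f;x';z') = \Delta(f;x')$; in particular, the vertices of both projected polyhedra are non-solvable. Since $\ini_\mm(f)$ is a $\nu$-th power of a linear form in both presentations, the images of $z$ and $z'$ in $\mm/\mm^2$ must be proportional, so $z' \equiv \lambda z \pmod{(x) + \mm^2}$ and $x' \equiv \alpha x + \gamma z \pmod{\mm^2}$ for some $\lambda, \alpha \in \K^\times$ and $\gamma \in \K$. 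I would then reduce to the case $x' \equiv \alpha x \pmod{\mm^2}$ by absorbing the $\gamma z$-contribution into a further modification of $z'$, appealing to non-solvability to ensure $\Delta(f;x';z')$ is unchanged. With leading terms aligned up to units, a direct comparison of $v(f;x;z)$ and $v(f;x';z')$ in the respective expansions yields $\delta(f;x) = \delta(f;x')$.

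The main obstacle is the book-keeping in this last reduction: higher-order terms in the change of variables $(x,z) \mapsto (x',z')$ generate cross-monomials whose potential effect on the vertex must be ruled out. It is precisely the non-solvability of the vertices of both characteristic polyhedra that rigidifies the situation and prevents such shifts. This technical heart of the argument is carried out in full generality in \cite[Cor.~B(3)]{CJSch}; in our setting the dimension-one hypothesis on $\cO/(f)$ considerably simplifies matters and was already the reason we could stay inside $\cO$ rather than pass to its completion in Proposition~\ref{Prop:char_poly_in_R}.
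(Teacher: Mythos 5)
The paper does not actually prove this proposition --- it is quoted directly from \cite[Corollary~B(3)]{CJSch} (see also \cite[Theorem~4.18]{CS-dim2}), and your outline ultimately defers its technical heart (invariance under change of the parameter $x$) to exactly that reference, so the two are in essential agreement. Your preliminary reductions (unit multiples of $f$, choice of $y$) are correct and harmless additions, though the phrase about ``absorbing the $\gamma z$-contribution into a further modification of $z'$'' is not literally meaningful as stated --- one cannot alter $x'$ by changing $z'$; what is actually needed is that perturbing $x$ by elements of $(z)+\mm^2$ cannot move a non-solvable vertex, which is precisely the content of the cited corollary.
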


Associated to the vertex $ \delta $, there is also the notion of the {\em $ \delta $-initial form $ \ini_\delta (f) $ of $ f $}, which is determined by the initial form of $ f $ at $ \mm $ and the terms of the expansion contributing to the vertex $ \delta $,
	\[
	\ini_\delta (f) := Y^\nu + \sum_{(a,b) \, : \, \frac{a}{\nu- b} = \delta} c_{a,b} X^a Y^b.
	\]
In \cite[Corollary~B(3)]{CJSch}, it is proven that the graded ring $ \operatorname{gr}_\delta (\cO/(f)) \simeq \K[X,Y]/(\ini_\delta (f)) $ is an invariant of the singularity, thus it is not depending on an embedding.

\begin{Rk}
	\label{Rk:dir_dim_zero}
	At the beginning of the section, we imposed the assumption \eqref{eq:dir_dim_one}. 
	If this is not true, 
	then one can verify that 
	the projected polyhedron $ \Delta(f;x;y) $ is the semi-line starting at $ v = 1 $
	and no change in $ y $ can eliminate the vertex.
	Hence, we set $ \Delta(f;x) := \Delta(f;x;y) = [1; \infty )  $ and $ \delta = \delta(f;x) := 1 $
	and we have $ \ini_\delta (f) = \ini_\mm (f) $.
\end{Rk}

\begin{Def}
	\label{Def:Fnu_Fdelta}
	Let $ f \in \cO $ be as above. 
	Since $ \K[x,y] \subset \cO $, we introduce the following lifts to $ \cO $ of the initial form of $ f $ at $ \mm $ and of the $ \delta $-initial form of $ f $:
	\[
		\begin{array}{l}
			\displaystyle 
			F_\nu := \sum_{(a,b) \, :\, a+b = \nu} c_{a,b} x^a y^b \in \cO ,
			\\[20pt]
			\displaystyle 
			F_\delta := F_\nu  + \sum_{(a,b) \, : \, \frac{a}{\nu- b} = \delta} c_{a,b} x^a y^b  \in \cO . 
		\end{array} 
	\]
\end{Def}

\begin{Ex}
	\label{Ex:picture}
	Consider the hypersurface $ \{ f = 0 \}  $ given by $ f = z^7 - x^6 z^2 + x^{15} \in \IF_p [x,z] $ (for any prime $p $) and $ q $ the origin.
	We have $ \Delta(f;x;z) = \Delta(f;x)  $,
	$ \delta = \frac{6}{5} $
	and 
	\[  
		F_\delta = z^7 - x^6 z^2 = z^2 (z^5 - x^6) .
	\]
	Observe that $ F_\delta $ is not reduced
	and the reduction of the curve $ \{ F_\delta = 0 \} $ is singular at the origin.
	In the following picture, we show the Newton polyhedron of $ f $ 
	and mark its projection which leads to the characteristic polyhedron after a homothety, see Figure \ref{Fig:projected}.
\begin{figure}[h] 
	\[
	\begin{tikzpicture}[scale=0.5]
		
		\filldraw[lightgray] (0,9.2)--(0,7)--(6,2)--(15,0)--(16.2,0) -- (16.2,9.2);
		
		\draw[->] (0,0)--(0,9.5); 
		\node at (-0.4,9.7) {$  z $};
		
		\draw[->] (0,0)--(16.5,0); 
		\node at (17, 0) {$ x $};
		
		\foreach \x in {1,...,16} 
		\draw (\x,0.1) -- (\x,-0.1);
		
		\foreach \y in {1,...,9} 
		\draw (0.1,\y) -- (-0.1,\y);
		
		
		\foreach \x in {1,...,16}
		\draw[dotted] (\x,0) -- (\x,9.5);
		\foreach \y in {1,...,9}
		\draw[dotted] (0,\y) -- (16,\y);
		
		\draw[ thick] (0,9.2)--(0,7)--(6,2)--(15,0)--(16.2,0);
		
		\draw[dashed, very thick] (6,2)--(8.4,0);
		
		\draw[[-, ultra thick] (8.4,0) -- (16.2,0); 
		
		\node at (10, -1) {$ \nu \delta =42/5$};
		\node at (-1.5, 7) {$ \nu =7 $};
		
	\end{tikzpicture}
	\] \caption{} \label{Fig:projected}
	\end{figure}
\end{Ex}

\section{Invariant and parameters}\label{Sec:inv}

We continue as above, with a curve singularity $ C $ on a smooth surface $ S $ and a singular point $q$. 
In the local ring $ \cO := \cO_{S,q} $ with maximal ideal $ \mm $ and perfect residue field $ \K $, the curve $ C $ is given as $ \{ f = 0 \} $. 
We assume $f$ is not a pure power of a regular parameter,
in other words, the reduction of $ C $ is singular at $ q $, as before.
The previous section provided an integer $\nu$ --- the order of $C$ at $q$ --- and a rational number $\delta$ --- indicating that the \emph{characteristic polyhedron} is the interval $[\delta,\infty)$, which are invariants of the singularity.

From here on we combine these two numbers in the \emph{invariant} 
$$ \inv_C(q) := \inv_f(q) := (a_1,a_2) = (\nu, \delta\nu).$$ It indicates that the \emph{Newton polyhedron} of $f$ with respect to the parameters $(x_1,x_2) := (z,x)$ indicated in the previous section has the affine span of  $\left( (a_1,0), (0,a_2) \right)$ as supporting hyperplane along an edge, cf.~Example~\ref{Ex:picture}.
Note that we switched the order of the parameters. The reason for this is that we aim to be consistent with the existing literature.

The value set of the invariant is ordered lexicographically and as such it is well-ordered: while $a_2$ is rational, its denominator lies in the finite set  $\{1,\ldots, a_1-1\}$.

To recall, the parameters $(x,z)$ were chosen as follows: 
\begin{itemize}
	\item Expanding $ f = F_\nu + h $ at $q$ with $ F_\nu $ introduced in Definition~\ref{Def:Fnu_Fdelta} and $ h \in \mm^{\nu +1} $, 
	if $ F_\nu $ is \emph{not} a pure $ \nu$-power of a parameter, any pair of regular parameters $(x,z)$ works and $\delta = 1$.
	\item Otherwise, 
	\begin{itemize} 
		\item $z^\nu\equiv f \mod \mm^{\nu+1} $, in particular the residue  $z \mod  \mm^2  $ is uniquely defined.
		\item $x$ is an arbitrary parameter so that $(x,z)$ is a regular system, namely we have an equality of ideals $ \mm = (x,z)$,
		\item $z$ is chosen so that $\Delta(f;x;z) = \Delta(f;x)$, but otherwise arbitrarily.
	\end{itemize}	
	In other words, $z$ maximizes $\delta(f;x;z)$.
	\item The value $\delta$ is characterized by the fact that the vertex of the characteristic polyhedron is not solvable and it is an invariant of the singularity of $ C $ at $ q $ (Proposition~\ref{Prop:delta_invariant}). 
\end{itemize}
We note that the parameters $(x_1,x_2) = (z,x) $ are not uniquely defined. A natural question that arises is: 
\begin{quote} 
	Is there a canonical structure, independent of choices, determined by the situation?
\end{quote}

We answer this question next.

\section{The center}\label{Sec:center}

Given any parameters $(x_1,x_2)$ of the regular local ring $ ( \cO, \mm, \K ) $ and positive rational numbers $ a_1, a_2 \in \IQ_+ $ we use the symbol $J = (x_1^{a_1},x_2^{a_2})$ to denote the monomial valuation $v_J\colon \cO \smallsetminus \{0\} \to \IQ$ uniquely determined by $v_J(x_1) = \frac1{a_1}, v_J(x_2) = \frac1{a_2}.$
Explicitly, given $ g \in \cO \smallsetminus \{ 0 \}$ with an expansion $g(x_1,x_2) = \sum d_{i_1,i_2} x_1^{i_1}x_2^{i_2} \neq 0$ we have 
$$v_J(g) = \min\left\{\frac{i_1}{a_1}+ \frac{i_2}{a_2}\ \big| \  d_{i_1,i_2} \neq 0\right\}.$$

The notation $J = (x_1^{a_1},x_2^{a_2})$ is chosen to indicate that $x_1^{a_1}$ and $x_2^{a_2}$ both have valuation 1. Once again, the data of $(x_1,x_2)$ and $(a_1,a_2)$ determines the valuation but is not uniquely determined by it. 
\begin{Rk}\label{Rk:local} In \cite[Section~2.2]{ATW_weighted}, a more global notion of \emph{valuative $\IQ$-ideals} is used to describe centers. Since we work locally, valuations suffice.
\end{Rk}

For the sake of ordering, we restrict attention to centers with $a_1\leq a_2$. This does not limit the generality since we may switch $a_1$ with $a_2$ if necessary.

The notation is also suggestive in the following way: as in \cite[Section~5.2]{ATW_weighted}, we say that a center $J$ is \emph{admissible} for $f$ if $v_J(f) \geq 1$. This is equivalent to saying that $f\in (x_1^{a_1},x_2^{a_2})^{\Int}$.

\begin{Thm}
	\label{Thm:J_unique}
	 Let $\{f=0\}$ be a curve singularity embedded in a surface over a perfect field, as before, such that $ f $ is not a pure power of a parameter. Set 
	$$(a_1,a_2) := (\nu,\nu\delta), \ x_1 := z, x_2 :=\ x , $$ 
	where $ \Delta(f;x;z) = \Delta(f;x) = [\delta; \infty) $, as before. Then the center $J = (x_1^{a_1},x_2^{a_2})$ is uniquely determined. It is the unique center admissible for $f$ for which $(a_1,a_2)$ attains its maximum for the lexicographical order.
\end{Thm}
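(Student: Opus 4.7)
The plan is to verify in sequence: (i) $J$ is admissible, (ii) $(a_1,a_2)=(\nu,\nu\delta)$ is lex-maximal among admissible centers $J'=(x_1'^{a_1'},x_2'^{a_2'})$ with $a_1'\leq a_2'$, and (iii) the admissible center realizing this maximum is unique as a valuation. Admissibility is a direct rearrangement: in the expansion $f=\sum c_{a,b}x^a z^b$, each term has $v_J(c_{a,b}x^a z^b)=a/(\nu\delta)+b/\nu\geq 1$, by the polyhedron inequality $a/(\nu-b)\geq\delta$ (for $b<\nu$) or trivially (for $b\geq\nu$).

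For lex-maximality, expand $f=\sum d_{i_1,i_2}x_1'^{i_1}x_2'^{i_2}$. A degree-$\nu$ support point exists since $\nu_q(f)=\nu$; admissibility together with $a_1'\leq a_2'$ yields $\nu/a_1'\geq 1$, hence $a_1'\leq\nu$. If $a_1'=\nu$ and $a_2'>\nu$, then any degree-$\nu$ support point of positive $x_2'$-degree would force $a_2'\leq\nu$, so $\ini_\mm(f)=c\,x_1'^{\nu}$; comparison with the canonical $\ini_\mm(f)=z^\nu$ (using $\K$ perfect to extract $\nu$-th roots) gives $x_1'\equiv\lambda z\pmod{\mm^2}$ for some $\lambda\in\K^\times$. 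Admissibility at support points $(i_1,i_2)$ with $i_1<\nu$ rearranges to $a_2'\leq\nu i_2/(\nu-i_1)$, and minimizing gives
\[
a_2'/\nu\;\leq\;\inf\Delta(f;x_2';x_1')\;\leq\;\inf\Delta(f;x_2')\;=\;\delta,
\]
by Definition~\ref{Def:char_poly} and invariance of $\delta$ (Proposition~\ref{Prop:delta_invariant}). Hence $(a_1',a_2')\leq_{\mathrm{lex}}(\nu,\nu\delta)$.

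For uniqueness, equality $(a_1',a_2')=(\nu,\nu\delta)$ forces $(x_2',x_1')$ itself to be a characteristic system with $x_1'\equiv z\pmod{\mm^2}$; further $v_{J'}$-preserving rescalings and linear combinations allow us to take $x_2'\equiv x\pmod{\mm^2}$ as well. Writing $z=x_1'-h_1$ and $x=x_2'-h_2$ with $h_i\in\mm^2$, every monomial in $\mm^2$ has $v_{J'}>1/(\nu\delta)$, so $v_{J'}(x)=1/(\nu\delta)=v_J(x)$. To obtain $v_{J'}(z)=1/\nu=v_J(z)$ it suffices to show $v_{J'}(h_1)\geq 1/\nu$; equivalently, the $(x_1',x_2')$-expansion of $h_1$ contains no monomial $x_2'^k$ with $2\leq k<\delta$. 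The argument is that if some such $e_{0,k}$ were nonzero, substituting $z=x_1'-h_1$ into $z^\nu$ would introduce a support point in the $(x_2',x_1')$-expansion of $f$ with projected polyhedron vertex strictly below $\delta$ --- via the cross term $-\nu\,x_1'^{\nu-1}h_1$ when $p\nmid\nu$, or via the Frobenius term $-h_1^{p^r}$ (raised to the $m$-th power) when $\nu=p^r m$ --- while all potential competing contributions from the terms $c_{a,b}x^a z^b$ of $f$ vanish by the characteristic property of $(x,z)$. This contradicts $(x_2',x_1')$ being characteristic, forcing $e_{0,k}=0$. Thus $v_{J'}=v_J$ on $\widehat\cO$.

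The main obstacle is the final unsolvability argument: verifying carefully that no cross-term from $c_{a,b}x^a z^b$ (after substituting $z=x_1'-h_1$, $x=x_2'-h_2$) and no higher-order term from $h_1^{\geq 2}$ can cancel the offending support point, with a case split by whether $p\mid\nu$. This is Hironaka's unsolvability criterion for characteristic polyhedron vertices \cite[Theorem~(4.8)]{HiroCharPoly}, specialized to our plane-curve setting and supported by the invariance results of \cite{CJSch}. Everything else is polyhedral arithmetic and standard order considerations.
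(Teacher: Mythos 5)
Your parts (i) and (ii) are correct and essentially the paper's own argument: admissibility is the polyhedron inequality rearranged, $a_1'\leq\nu$ comes from a degree-$\nu$ support monomial, and $a_2'\leq\nu\delta$ comes from $\delta(f;x_2';x_1')\leq\delta(f;x_2')=\delta$ via Definition~\ref{Def:char_poly} and Proposition~\ref{Prop:delta_invariant}. The divergence --- and the gap --- is in part (iii). The paper sidesteps your coordinate-change computation entirely: it first arranges a \emph{common} second parameter $x_2=y_2$ (possible because $\IP(\mm/\mm^2)$ has at least three points), writes $x_1=\epsilon y_1+\mu x_2^A$ with $\epsilon,\mu$ units, and shows $v_{J_y}(x_1)=1/a_1$ by a short non-Archimedean argument: if $v_{J_y}(x_1)<1/a_1$, the term $d_{a_1,0}x_1^{a_1}$ has strictly smaller $v_{J_y}$-value than every other term of the $(x_1,x_2)$-expansion of $f$, so $v_{J_y}(f)<1$, contradicting admissibility; equality of the valuations then follows by comparing the integrally closed ideals $\{v\geq k\}$. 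No case split on $p\mid\nu$ and no coefficient tracking is needed.

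The genuine gap is your final step. You correctly reduce uniqueness to showing that $h_1=x_1'-z$ contains no monomial $x_2'^k$ with $2\leq k<\delta$, but the claim that the offending support point produced by $-\nu x_1'^{\nu-1}h_1$ (or by $-h_1^{p^r}$ when $\nu=p^rm$) survives all cancellations is asserted rather than proved, and \cite[Theorem~(4.8)]{HiroCharPoly} does not supply it: that theorem concerns constructing well-prepared coordinates by vertex solving, not comparing the monomial valuations attached to two different well-prepared systems. The cancellation analysis is genuinely delicate when $p\mid\nu$, since terms $c_{a,b}x^az^b$ with $b\geq p^r$ can contribute monomials of the same $v_{J'}$-value as the offending one. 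The clean repair is to argue at the level of the valuation rather than of individual coefficients: if $h_1$ contained such a monomial, then $w:=v_{J'}(z)=v_{J'}(h_1)<1/\nu$, so $v_{J'}(z^\nu)=\nu w<1$, while every other support monomial of the $(x,z)$-expansion satisfies $v_{J'}(x^az^b)=a/(\nu\delta)+bw>\nu w$ because $a/(\nu\delta)\geq(\nu-b)/\nu>(\nu-b)w$; hence $v_{J'}(f)=\nu w<1$, contradicting admissibility of $J'$. This is precisely the paper's non-Archimedean argument transplanted into your coordinates. Note also that you only establish $v_{J'}\geq v_J$ on $(x,z)$-monomials; concluding $v_{J'}=v_J$ requires the symmetric inequality, e.g.\ via the paper's comparison of the ideals of elements of value $\geq k$.
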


We note that this theorem implies Theorem \ref{Thm:Main1}\eqref{It:well-defined}.
\begin{proof}
	First, observe that
	\begin{equation}
		\label{eq:lexmax} 
		\max_{\geq_{\rm lex}} \{ (b_1, b_2) \in \IQ_+^2 \mid \exists \, (y_1,y_2) :  (y_1^{b_1},y_2^{b_2}) \mbox{ admissible center for } f  \}
		\geq (a_1, a_2)
	\end{equation} 
	since $ J = (x_1^{a_1},x_2^{a_2}) = (z^\nu,x^{\nu\delta}) $ is admissible by the definition of the characteristic polyhedron.
	 Indeed, given an expansion\footnote{Note that $d_{i_1,i_2}=c_{b,a}$ with  switched indices from the previous section.} 
	 $$f(z,x) =    \sum d_{i_1,i_2} z^{i_1}x^{i_2}, $$ the projection $i_2/(a_1-i_1) $ of any $(i_1,i_2)$ with $d_{i_1,i_2} \neq 0$ must lie in $[a_2/a_1,\infty)$ ---  this follows from Definition \ref{Def:projected polyhedron}, taking into account that $d_{i_1,i_2}$ are switched from  that context. The inequality   $i_2/(a_1-i_1) \geq a_2/a_1$ is equivalent to  $i_1/a_1 + i_2/a_2 \geq 1$.

	{\bf Uniqueness of $ \boldsymbol{a_1} $:} 
	Let $(y_1^{b_1},y_2^{b_2})$ be an admissible center with $b_1 \leq b_2$.
		Since the order of $ f $ at $ (y_1,y_2) $ is $ \nu $, 
		there exists a monomial $ y_1^a y_2^{\nu-a} $ with non-zero coefficient in an expansion of $ f $, for some $ a \in \{0, \ldots, \nu \} $.
		Since $(y_1^{b_1},y_2^{b_2})$ is admissible and $b_1 \leq b_2$, 
	we have $b_1 \leq \nu=a_1$. 
	Hence, this shows 
		\[
			\max \{ b_1 \in \IQ_+ \mid \exists \, b_2 \in \IQ_+ \, \mbox{ and } \, (y_1,y_2) :  (y_1^{b_1},y_2^{b_2}) \mbox{ admissible center for } f  \}
		= a_1.
		\]
	
	{\bf Uniqueness of $ \boldsymbol{a_2} $:} 
	Next, let us have a look at $ b_2 $. 
		We already know that any admissible center with maximal $ b_1 $ is of the form $ (y_1^{\nu},y_2^{b_2})$.
	By the definition of the characteristic polyhedron, we have $\delta(f;y_2;y_1) \leq \delta$.
	Hence, in an expansion $ f = f(y_1, y_2) $, there exists a monomial $ y_1^{j_1}y_2^{j_2} $ with non-zero coefficient and $ j_2 / (\nu - j_1) \leq \delta $. 
	(Clearly, we have $ j_2 \neq 0 $ and $ j_1 < \nu $ here.)
Since $ (y_1^{\nu},y_2^{b_2})$ is admissible for $ f $, we deduce from this that $ b_2 \leq \nu \delta = a_2 $ 
proving that \eqref{eq:lexmax} is an equality:
	Due to the admissibility, we have 
	$ {j_1}/{\nu} + {j_2}/{b_2}  \geq 1 $
	for the chosen $ (j_1, j_2) $. 
	This can be rewritten as 
	$
		b_2 \leq \nu {j_2}/{(\nu - j_1)}
	$
	which implies $ b_2 \leq \nu \delta $, as desired.

	{\bf Uniqueness of the center:} 
	It remains to prove that the chosen center is independent of the choice of the parameters $ (x_1, x_2) $.
	Let $J_x = (x_1^{a_1},x_2^{a_2})$ and $J_y=(y_1^{a_1},y_2^{a_2})$ be admissible centers for $f$. We claim that $J_x = J_y$. 
	We may assume $a_1<a_2$, otherwise the equality of centers follows directly, as in that case $v_{J_x} = v_{J_y} = a_1 \nu_\mm $.
	
	\emph{Choosing the same $x_2$.} 
	First note that, given $x_1$, the parameter $x_2$ can be chosen to be any parameter such that $(x_1,x_2) =  \mm$, and similarly for $y_2$. There is always a parameter $x_2$ so that both $(x_1,x_2) = (y_1,x_2) = \mm  $ since the projective space $\IP(\mm/\mm^2)$ has at least 3 points. We may thus assume $x_2 = y_2$. In particular $v_{J_y}(x_2) = v_{J_x}(x_2) =  1/a_2$. 
	
	\emph{The valuations of $x_1$ agree.}  
	We claim that $v_{J_y}(x_1) = 1/a_1$. 
	If $ x_1 $ and $ y_1 $ only differ by the multiplication by a unit,
		the statement is clear. 
		Hence, assume this is not the case. 
		Since $ (x_1, x_2) $ and $ (y_1, x_2) $ are systems of regular parameters for $ \cO $, 
		an expansion of $ x_1 $ in terms of $ (y_1, x_2 ) $ is of the form 
	$ x_1 = \epsilon y_1 + \mu x_2^A $, for some $ A \in \IZ_+ $ and units $ \epsilon, \mu $.
	As $ v_{J_y} $ is a monomial valuation, we get 
	\[ 
		v_{J_y} (x_1) = \min \left\{ \frac{1}{a_1}, \frac{A}{a_2}  \right\} \leq \frac{1}{a_1} .
	\]
	Arguing by contradiction assume the inequality is strict. 
	Considering the expansion of $f$ in terms of $(x_1,x_2)$, 
	all nonzero terms $d_{i_1,i_2} x_1^{i_1}x_2^{i_2}$ satisfy $i_2/a_2 \geq (a_1 - i_1) / a_1 $
	since $ J_x $ is admissible for $ f $, and equality holds
	for $i_1 = a_1, i_2=0$. 
	The value with respect to the valuation $ v_{J_y} $ is
	 $v_{J_y}(d_{i_1,i_2} x_1^{i_1}x_2^{i_2}) = i_1v_{J_y}(x_1) + i_2/a_2 \geq a_1 v_{J_y}(x_1)$,
	 with equality if and only if $i_1 = a_1, i_2 = 0$. Thus  the term $d_{a_1,0}x_1^{a_1}$, which is nonzero by the assumption $a_1 < a_2$,  has strictly smaller value than all other terms. 
	 By the non-Archimedean property we have $v_{J_y}(f)= a_1 v_{J_y}(x_1) < 1$, giving a contradiction. Hence $v_{J_y}(x_1) = 1/a_1$, and similarly $v_{J_x}(y_1) = 1/a_1$.
	
	\emph{The valuations coincide.} For any integer $k$, the integrally closed ideals $(x_1^{ka_1},x_2^{ka_2})^\Int$ and $(y_1^{ka_1},y_2^{ka_2})^\Int$ of functions of value $\geq k$ with respect to $J_x,J_y$, are contained in each other, hence are equal. 
	In conclusion, the valuations coincide, $J_x = J_y$. 
\end{proof}

Write  
\begin{equation}
	\label{eq:expansion} 
	f =f_{J,1} + \widetilde h,
\end{equation} where all monomials of $f_{J,1}$ have valuation 1 and $ \widetilde h \in \cO $ with $ v_J (\widetilde h) > 1 $.
Notice that $ f_{J,1} = F_\delta $ (Definition~\ref{Def:Fnu_Fdelta}), but in order to emphasize the connection to the valuation $ v_J $, we use in the following the symbol $ f_{J,1} $.
 
\begin{Cor}\label{Cor:leading-term} The polynomial $f_{J,1}$ is not a $\nu$-th power of a regular parameter. We have $\inv_{f_{J,1}}(q) = (a_1,a_2)$ with the same center $(x_1^{a_1},x_2^{a_2})$. \end{Cor}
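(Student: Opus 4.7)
The plan is to read both claims off the structure of $f_{J,1}$: by Definition~\ref{Def:Fnu_Fdelta}, $f_{J,1} = F_\delta$ is the sum of $x_1^{a_1}$ and the monomials $d_{i_1,i_2} x_1^{i_1} x_2^{i_2}$ of $f$ with $i_1 < a_1$ and $i_2 = (a_1-i_1)\delta$. Since the vertex $\delta$ of $\Delta(f;x_2;x_1) = \Delta(f;x_2)$ is attained by at least one monomial of $f$, such a monomial with $i_1 < a_1$ exists. In particular $\ini_\mm(f_{J,1}) = X_1^\nu$, so $\nu_q(f_{J,1}) = a_1$.

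For the second assertion, $\Delta(f_{J,1}; x_2; x_1) = [\delta, \infty)$ by inspection, and solvability of the vertex $\delta$ depends only on the coefficients of the monomials contributing to it --- these coincide for $f$ and $f_{J,1}$. Since $\delta$ is not solvable for $f$, it is not solvable for $f_{J,1}$ either, forcing $\Delta(f_{J,1}; x_2) = [\delta, \infty)$. Combined with $\nu_q(f_{J,1}) = a_1$, this gives $\inv_{f_{J,1}}(q) = (a_1, a_2)$; the uniqueness statement in Theorem~\ref{Thm:J_unique} then identifies the admissible center as $(x_1^{a_1}, x_2^{a_2})$.

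For the first assertion, assume toward contradiction that $f_{J,1} = g^\nu$ for some $g \in \mm \setminus \mm^2$. Comparing $\mm$-initial forms gives $\ini_\mm(g) = X_1$ after rescaling, and since $v_J(f_{J,1}) = 1$ we have $v_J(g) = 1/a_1$. Decomposing $g = g_0 + g_1$ with $v_J(g_0) = 1/a_1 < v_J(g_1)$, every cross-term $\binom{\nu}{k} g_0^{\nu-k} g_1^k$ in $g^\nu$ (for $k \geq 1$) has $v_J$-value strictly greater than $1$, so since $f_{J,1}$ has only monomials of value exactly $1$, the $k \geq 1$ sum must vanish in $\cO$, forcing $g_1 = 0$ (see below). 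The monomials of $\cO$ having $v_J$-value $1/a_1$ are only $x_1$ and, when $\delta \in \IZ_{>0}$, $x_2^\delta$, so $g = \alpha x_1 + \beta x_2^\delta$ with $\alpha \in \K^\times$ and $\beta \in \K$. The case $\beta = 0$ gives $f_{J,1} = \alpha^\nu x_1^{a_1}$, contradicting the structural observation. Otherwise $\delta \geq 2$ by Observation~\ref{Obs:Proj_poly_non-empty}, so $(x_1', x_2) := (\alpha x_1 + \beta x_2^\delta, x_2)$ is a regular system of parameters with $f_{J,1} = (x_1')^\nu$. Direct substitution $x_1 = (x_1' - \beta x_2^\delta)/\alpha$ shows that each $(x_1', x_2)$-monomial appearing in the expansion of $\widetilde h$ has the same $v_J$-value as its contributing $(x_1, x_2)$-monomials, all of which exceed $1$; via the algebraic identity $i_1/a_1 + i_2/a_2 > 1 \iff i_2/(a_1-i_1) > \delta$ (for $i_1 < a_1$), the vertex of $\Delta(f; x_2; x_1')$ is therefore strictly greater than $\delta$, exhibiting $\delta$ as solvable for $f$ --- a contradiction.

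The main subtlety is the vanishing of the $k \geq 1$ cross-terms in positive characteristic: when $p \mid \nu$ the linear term $\nu g_0^{\nu-1} g_1$ drops out. Writing $\nu = p^e m$ with $\gcd(m,p) = 1$ and applying Frobenius $(g_0 + g_1)^{p^e} = g_0^{p^e} + g_1^{p^e}$, the lowest surviving cross-term in $g^\nu = (g_0^{p^e} + g_1^{p^e})^m$ is $m\, g_0^{p^e(m-1)} g_1^{p^e}$ (with $m$ a unit in $\K$); this cross-term has strictly smaller $v_J$-value than any other surviving cross-term and so cannot cancel, forcing $g_1^{p^e} = 0$ and hence $g_1 = 0$ in the domain $\cO$.
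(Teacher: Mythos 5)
Your argument is correct and follows the same route as the paper: both claims are reduced to the non-solvability of the vertex $\delta$ together with the equality $\ini_\delta(f_{J,1}) = \ini_\delta(f)$. You simply spell out in full the implication ``pure $\nu$-th power $\Rightarrow$ $\delta$ solvable'' (including the characteristic-$p$ binomial coefficient issue, handled via Frobenius) which the paper treats as immediate from the characteristic-polyhedron formalism.
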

\begin{proof}  Indeed the value $\delta$ has the property that $f_{J,1}(x_1,x_2)$ is not a pure power of a parameter,
	since $ \delta \in \Delta (f; x_2 ;x_1) $ is not solvable.
	 Hence the vertex $\delta$ of the projected polyhedron $\Delta(f_{J,1}; x_2;x_1)  = [\delta; \infty )  $ is not solvable because $ \ini_\delta (f_{J,1})  = \ini_\delta (f) $.
 \end{proof}

\section{Weighted blow-ups}\label{Sec:weighted}

In this section, we recall the notion of weighted blow-ups and discuss particularities arising in positive characteristic.

\begin{Def}[{\cite[Section~2.4]{ATW_weighted}}]
		\label{Def:reduced_center}
		Given a center $J = (x_1^{a_1},x_2^{a_2})$,
		the associated \emph{reduced center} is defined as 
		$\bar J = (x_1^{1/w_1}, x_2^{1/w_2})$, 
		where $w_1,w_2 \in \IZ_+ $ are relatively prime integers 
		such that 
		\[ 
		\ell \cdot \left(\frac1{w_1},\frac1{w_2}\right) = (a_1,a_2)
		\] 
		for some integer $\ell \in \IZ_+ $.   
		In particular $\ell = a_1w_1 = a_2w_2$ is the least integral common integer multiple of $a_1,a_2$.
\end{Def}

We note that $v_{\bar J} = \ell v_J$.

\medskip

Recall that the usual blow-up of a regular center $ C := V (I) \subset S  $ is
	determined by the Rees algebra $ \bigoplus_{n\geq 0} I^n $;
	more precisely, the blow-up is the proper birational morphism 
	$ \operatorname{Bl}_C (S) := \Proj ( \bigoplus_{n\geq 0} I^n ) \to S $.
	In the weighted setting, we use the following generalizations
	which already appear in Rees's original work \cite{Rees}.

\begin{Def}
	Let $J = (x_1^{a_1},x_2^{a_2})$ with reduced center $\bar J = (x_1^{1/w_1}, x_2^{1/w_2})$. 
	For $ n \in \IZ $, set
	\[
		\cI_n := \{g \in \cO \mid  v_{\bar J}(g) \geq n\}. 
	\]

	\begin{enumerate}
		\item 
		The \emph{Rees algebra associated to $\bar J$} is
		$ R := \bigoplus_{n\geq 0} \cI_n$.  
		
		\item 
		The \emph{extended Rees algebra associated to $\bar J$} is 
		$\tilde R = \bigoplus_{n\in \IZ} \cI_n$. 
	\end{enumerate}
\end{Def}
We note that $\cI_n = \cO$ for $n\leq 0$. 
Moreover, both $R$ and $\tilde R$ are naturally $\cO$-algebras, justifying the terms.

In view of Remark \ref{Rk:local}, this construction globalizes, with $\cI_n = \cO_S$ away from $q$.

The algebra $\tilde R$ has a nice local presentation
\cite[Proposition~5.2.2]{Quek-Rydh}:
\begin{equation}
	\label{eq:presentation}
	\tilde R =  \cO[s,x_1',x_2'] / (x_1 - s^{w_1} x_1',x_2 -s^{w_2} x_2'),
\end{equation}
where $ s \in \cI_{-1} $ is the element $ 1 \in \cO $, and 
$ x_i' \in \cI_{w_i} $ corresponds to the element $ x_i $ in degree $ w_i $, for $ i \in \{1, 2\} $.

The variable $s$ is the exceptional variable, whose vanishing locus will define the exceptional divisor once we define the blow-up. 
The variable $x_i'$ is called the transformed variable corresponding to $x_i$, for $ i \in \{ 1, 2 \} $.

\begin{Constr}[{Degenerating to the weighted normal cone}]
	\label{Constr:Degen}
	 We write $B = \Spec_S(\tilde R)$ for the spectrum relative to the scheme $ S  $ of the quasi-coherent sheaf of $ \cO_S $-algebras $ R $. 
It has a structure morphism  $B \to S$ but also a morphism  $B \to \IA_\K^1 = \Spec k[s]$. This second morphism makes $B$ into the degeneration of $S$ to the weighted normal cone of $\tilde J$: 
\begin{itemize}
	\item Its fiber over $s=1$ is naturally isomorphic to $S$ as $x_1=x_1', x_2=x_2'$. 
	\item Its fiber over $s=0$ lies over the point $(x_1,x_2) = (0,0)$; it is isomorphic to $\Spec k[x_1',x_2']$ with the natural action of $\IG_m$ with weights $w_i$ on $x_i'$, making it the weighted normal cone of the point in $S$.
\end{itemize}

The grading provides an action of $\IG_m = \Spec k[t,t^{-1}]$ on $B$, explicitly 
\begin{equation}
	\label{eq:action}
	t \cdot (s,x_1',x_2') \quad = \quad (t^{-1} s, t^{w_1} x_1', t^{w_2} x_2').
\end{equation} 
Note that the action stabilizes the locus $V(\oplus_{n>0} \cI_n) = V(x_1', x_2')$.
\end{Constr}

\begin{Def}[Weighted blow-up]\label{Def:weighted}
	Using the notation of Construction~\ref{Constr:Degen},
	we define $ B_+ := B \smallsetminus V(x_1', x_2') $, 
	which is equipped with its natural $\IG_m$-action \eqref{eq:action}.
	\\
	The {\em weighted blow-up of $S$ at $\bar J$} is defined as the Artin stack
	\[ 
		\pi \colon \operatorname{Bl}_{\bar J} (S) := [B_+/\IG_m] \to S,
	\]
	where the morphism $ \pi $ is induced by the structure morphism $ B \to S $.
\end{Def}

Since the locus $ V(x_1', x_2') $ is removed, 
the weighted blow-up of $ S $ at $ \bar J $ is covered by two charts,
where in the first one $ x_1' $ is invertible (we also say $ x_1' \neq 0 $ in this case) 
while in the second $ x_2' $ is invertible (i.e., $ x_2' \neq 0 $).
For an explicit example for this, we refer to Example~\ref{Ex:one} below.

\medskip

The notation $B$ comes from W{\l}odarczyk's interpretation of $B$ as a birational cobordism connecting $S$ and its weighted blow-up \cite{Jarek_cobordism}.

\medskip

The following properties hold, in particular showing Theorem \ref{Thm:Main1}\eqref{It:birational}.

\begin{Lem}
	Let $ J = (x_1^{a_1},x_2^{a_2}) $
	and let $ \pi \colon \operatorname{Bl}_{\bar J} (S) = [B_+/\IG_m] \to S $	be the weighted blow-up of $S$ at the reduced center $\bar J = (x_1^{1/w_1}, x_2^{1/w_2})$.
	We have:
	\begin{enumerate}
		\item 
		All stabilizers of the Artin stack  $ \operatorname{Bl}_{\bar J} (S) $ are naturally subgroups of $\IG_m$.
		In particular, $ \operatorname{Bl}_{\bar J} (S) $ is a {\em tame} Artin stack.
		
		\item 
		The dimension of the quotient $ \operatorname{Bl}_{\bar J} (S) $ is two
		and $ \pi $ is an isomorphism outside the exceptional divisor, which is defined on the smooth covering of $ B_+ $ by $ s = 0 $.
	\end{enumerate}	
	In other words, the weighted blow-up $ \pi $ is a birational morphism.  
	
	\begin{enumerate}
		\item[(3)]
		At $s = x_1'=0$ the stabilizer group is $\Gmu_{w_2}$; 
		while at at $s= x_2'=0$ the stabilizer group is $\Gmu_{w_1}$.
		Here, $ \Gmu_m $ denotes the group of $ m $-th roots of unit for $ m \in \IZ_+ $.
	\end{enumerate}
\end{Lem}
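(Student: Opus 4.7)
The plan is to read off all three statements directly from the local presentation \eqref{eq:presentation} of $\tilde R$ and the explicit formula \eqref{eq:action} for the $\Gm$-action. Since $\operatorname{Bl}_{\bar J}(S) = [B_+/\Gm]$ is presented as a quotient stack, the stabilizer of any geometric point is automatically a subgroup of $\Gm$; over a field such a subgroup is either $\Gm$ itself or some $\Gmu_n$, the latter being finite and diagonalizable, hence finite linearly reductive. So tameness will follow once all stabilizers are shown to be finite.

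For (1) and (3), I would compute the stabilizer of an arbitrary geometric point $P = (\sigma,\alpha,\beta)\in B_+$ (the values of the coordinates $(s,x_1',x_2')$) directly from \eqref{eq:action}. The fixed-point conditions read $t^{-1}\sigma = \sigma$, $t^{w_1}\alpha = \alpha$ and $t^{w_2}\beta = \beta$, and split into cases: if $\sigma\neq 0$, the first equation forces $t=1$; if $\sigma=0$ and $\alpha=0$ (hence $\beta\neq 0$ since $P\in B_+$), only $t^{w_2}=1$ survives, giving $\operatorname{Stab}(P)=\Gmu_{w_2}$; symmetrically $\operatorname{Stab}(P)=\Gmu_{w_1}$ when $\sigma=\beta=0$; and if $\sigma=0$ with both $\alpha,\beta\neq 0$ we need $t^{w_1}=t^{w_2}=1$, which by $\gcd(w_1,w_2)=1$ forces $t=1$. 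This simultaneously establishes (1) and (3).

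For (2), the presentation \eqref{eq:presentation} shows that the two relations $x_i = s^{w_i}x_i'$ allow us to eliminate $x_1,x_2$, so $B$ is locally a smooth three-dimensional scheme in the coordinates $(s,x_1',x_2')$. Since $\Gm$ acts on $B_+$ with finite stabilizers by (1), the quotient stack has dimension $3-1=2$. For birationality, I would restrict to the open locus $\{s\neq 0\}\subset B_+$, where the first fixed-point equation makes the $\Gm$-action scheme-theoretically free; the section $\{s=1\}$ trivializes this torsor, and on it \eqref{eq:presentation} reduces to $x_i=x_i'$, identifying $\{s=1\}$ with $S\smallsetminus\{q\}$ (the condition $(x_1',x_2')\neq(0,0)$ forces $(x_1,x_2)\neq(0,0)$). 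Hence $[\{s\neq 0\}/\Gm]\xrightarrow{\sim} S\smallsetminus\{q\}$, and $\pi$ is an isomorphism away from the exceptional divisor $V(s)$.

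I do not anticipate a serious obstacle here: everything reduces to manipulating \eqref{eq:action} and \eqref{eq:presentation}. The only subtlety worth flagging is that \eqref{eq:presentation} is a local presentation over $\cO=\cO_{S,q}$ (cf.\ Remark \ref{Rk:local}); away from $q$, $R$ and $\tilde R$ agree with $\cO_S$ concentrated in degree zero, so $\pi$ is trivially an isomorphism there, making the global conclusion consistent with the local calculation.
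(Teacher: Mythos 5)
Your proof is correct, but it takes a genuinely different route from the paper: the paper disposes of all three parts by citation to Quek--Rydh (their Lemma~1.1.2 for (1) and (3), Remark~3.2.10 for (2)), whereas you unwind the content of those references into a self-contained computation. Your case analysis of the stabilizer of $(\sigma,\alpha,\beta)\in B_+$ from the action formula \eqref{eq:action} --- $\sigma\neq 0$ forces $t=1$; $\sigma=0$ with exactly one of $\alpha,\beta$ nonzero gives $\Gmu_{w_1}$ or $\Gmu_{w_2}$; $\sigma=0$ with both nonzero gives $t=1$ by $\gcd(w_1,w_2)=1$ --- is exactly what the cited lemma asserts, and is worth having on the page; the one point to phrase carefully is that the equations $t^{w_i}=1$ must be read functorially so that the stabilizer is the group \emph{scheme} $\Gmu_{w_i}$, which is non-reduced when $p\mid w_i$ but still diagonalizable, so your tameness conclusion via finite linearly reductive stabilizers stands. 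Your treatment of (2) via the section $\{s=1\}$ trivializing the $\Gm$-torsor over $\{s\neq 0\}$ likewise reproduces the cited remark. Two minor imprecisions, neither fatal: one cannot literally ``eliminate $x_1,x_2$'' from \eqref{eq:presentation} unless $\cO$ is a polynomial or power-series ring in them --- the clean statement is that $\tilde R$ is regular of dimension $\dim\cO+3-2=3$, its maximal ideal at the vertex being generated by $s,x_1',x_2'$ since $x_i=s^{w_i}x_i'$ there; and away from $q$ the extended Rees algebra is $\cO_S$ in \emph{every} degree (so $\Spec_S(\tilde R)\simeq S\times\Gm$ there), not concentrated in degree zero --- but the conclusion that $\pi$ is an isomorphism off the exceptional divisor is unaffected, since your argument on $\{s\neq 0\}$ already covers $\pi^{-1}(S\smallsetminus\{q\})$ exactly. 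The trade-off is the usual one: the paper's citations are shorter and defer to a reference that handles weighted blow-ups in full generality, while your computation makes the lemma verifiable within the two-variable setting actually used here.
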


\begin{proof} For (1) see \cite[Lemma~1.1.2]{Quek-Rydh}. For (2) 
		see \cite[Remark~3.2.10]{Quek-Rydh}. Finally (3) 
		 follows from \cite[Lemma~1.1.2]{Quek-Rydh}.
\end{proof}

\begin{Rk} 
	Ending up with an Artin stack may seem like a big hurdle --- after all, when stabilizers are divisible by the characteristic, charts for an Artin stack involve an inseparable cover.  
	
	However the theory of destackifcation, as developed by Bergh \cite{Bergh} and Bergh--Rydh \cite{Bergh-Rydh}
	shows that, if $\cX$ is a tame separated Artin stack, with possibly singular moduli space $X$, there is always a \emph{destackification}, 
	which in its weakest form means a resolution of singularities $X'\to X$ of the coarse moduli space.
	
	More precisely, destackification provides a sequence of operations of smooth stacks  $\cX' = \cX_n \to \cdots \to \cX_1 \to \cX_0 = \cX$, each either a smooth blow-up or a root construction along a smooth divisor, such that both $\cX'$ \emph{and its coarse moduli space} $X'$ are smooth. This means that  the canonical morphism $X' \to X$ induced by functoriality of coarse moduli spaces is indeed a resolution. 
		The process is functorial, in particular not changing the locus where the stack structure of $\cX$ is trivial.
	
	In our situation, where stabilizers are subgroups of a torus, all actions are linearizable, and therefore all singularities are toric, 
	and the algorithm is combinatorial and explicit (if computationally expensive). 
	
	As an example, the quotient of $\IA_\K^2$ by the diagonal action of $ \Gmu_p $ defined by 
	$(x,y) \mapsto (\zeta x, \zeta^{-1}y)$
	(with $ p := \car(\K) > 0 $) 
	has coarse moduli space with equation $ uv=t^p $, clearly a toric singularity which is eminently resolvable.
	
	In general, destackification does not mix well with algorithms of resolution of singularities, in the sense that it does not respect natural invariants of singularities. It is therefore usually carried out after stack-theoretic resolution is completed. One can view our multi-weighted blow-up introduced in Section \ref{Sec:multi} below as an exception. It is a very partial destackification, taking out just the inseparable stabilizers, while invariants continue to drop. 
\end{Rk}

\begin{Rk}[{\'Etale slices are good, flat slices are not}]
	Let $ p = \car(\K) \geq 0 $ be the characteristic of the residue field of $ S $ at $ q $.
	
	If $p\nmid w_2$ (which includes the case $ p = 0 $)
	then the chart $x_2' \neq 0$ has an \'etale slice given by $x_2' = 1$, which gives an \'etale presentation of the chart in $ \operatorname{Bl}_{\bar J} (S)$ as a Deligne--Mumford stack; a similar situation happens on  $x_1'\neq 0$ if $p\nmid w_1$. 
	
	If $p \mid w_i$ the corresponding chart only has a flat slice, which does not preserve singularities --- this is seen in Example~\ref{Ex:one} 
	below. 
	This means that analyzing such chart forces us to work with a smooth presentation as a $\IG_m$-quotient, with three-dimensional tangent space.
\end{Rk}

\begin{Ex}
	\label{Ex:one}
	Let $ \K = \IF_p $, for $ p $ prime, and $ S = \IA_\K^2 $.
	We consider the weighted blow-up of $ S $ at 
	\[
		\bar J = (x_1^{1/(p+1)}, x_2^{1/p}) .
	\] 
	This comes up, for instance, when resolving the curve $C = \{x_1^p = x_2^{p+1}\}$.
	The presentation \eqref{eq:presentation} provides that the corresponding extended Rees algebra is 
	\[ 
		\tilde R = \cO[s,x_1',x_2']\  /\  (x_1 - s^{p+1} x_1',x_2 -s^{p} x_2').
	\]
	
	In the chart where $ x_1' $ is invertible, we may take the \'etale slice $x_1'=1$ of the action, providing the equations 
	$x_1 = s^{p+1}, x_2 = s^px_2'$ with the action of $\Gmu_{p+1}$ given by
	$(s,x_2') \mapsto (\zeta_{p+1}^{-1}\cdot s, \zeta_{p+1}^{-1}\cdot x_2')$, 
	as $ \zeta_{p+1}^p = \zeta_{p+1}^{-1}$, where $ \zeta_{p+1} $ is a primitive $(p+1)$-st root of unity. 
	This gives an isomorphism of this chart
	$$
		\left[\Spec_S\left(\cO_S[s,x_1',x_2', {x_1'}^{-1}]\ /\ (x_1 - s^{p+1} x_1',x_2 -s^{p} x_2')\right)\   \bigg/\  \IG_m \right]
		\ \simeq \
		\left[\Spec k[s,x_2']/\Gmu_{p+1}\right].
	$$ 
		Anticipating the next section, the proper transform
of $C$ is given by 
	$ \{{x_2'}^{p+1} = 1 \}$, a smooth curve in this chart, which does not even intersect the fixed-point  locus 
	$ \{ s=x_2'=0\} $ 
	of the action. Therefore, this chart is as well-behaved as one would wish: it provides a schematic resolution of the curve $C$ right away.
	
	On the other hand, notice that in the chart 
	$$\left[\Spec_S\left(\cO_S[s,x_1',x_2', {x_2'}^{-1}]\ /\  (x_1 - s^{p+1} x_1',x_2 -s^{p} x_2')\right)\ \ \bigg/\ \ \IG_m \right]$$ 
	where $ x_2' $ is invertible, 
	we have  an isomorphism of the  chart with  $[\Spec k[s,x_1']/\Gmu_{p}]$, the quotient of a smooth scheme by $\Gmu_p$, the action given by $(s,x_1') \mapsto (\zeta_p^{-1}s, \zeta_p x_1')$. It is a slice in the \emph{flat} topology of the $\IG_m$-action. The proper transform $C'$ of the curve $C$ is given by 
	$ \{ {x_1'}^p = 1 \}$. 
	This singularity has invariant $(p,\infty)$, strictly bigger than the invariant $(p,p+1)$ of the curve $C$ we started with. What is going on?
	
	The answer is that, since $\Gmu_p$ is not a smooth scheme, this chart does not know much about the singularity! In this case the action of $\Gmu_p$ on the curve 
	$ \{ {x_1'}^p = 1 \}$ is free, and the quotient is indeed smooth, but the chart does not tell this story.
	
	Instead, the action of $\IG_m$ on the scheme $B_+ = \Spec_S(\widetilde R) \smallsetminus V(x_1',x_2')$ provides a chart in the \emph{smooth} topology, which does know about singularities. The equation of $C'$ on this chart is 
	$\{{x_1'}^p = {x_2'}^{p+1}\}$. 
	It looks exactly like the original equation, since it is weighted-homogeneous. Note however that we removed the locus 
	$ \{ x_1'=x_2'=0 \}$, 
	so the curve $C'$ is indeed smooth. 
\end{Ex}

\section{Transforming $f$ by the weighted blow-up} \label{Sec:transform}

After introducing the invariant and the corresponding reduced center, we show next that the weighted blow-up makes the order of a curve singularity drop strictly.
Due to the increase of the dimension of the tangent space,
this is not sufficient for a desingularization and we are facing an induction challenge, see Observation~\ref{Obs:challenge}.
This motivates our turn towards multi-weighted blow-ups in the remainder of the article.

Recall that we consider a curve $ C $ embedded in a surface $ S $ and singular at a closed point $ q \in C $ at which the residue field is perfect.
Locally at $ q $, the curve is given as $ \{ f = 0 \} $, for some element $ f \in \cO = \cO_{S,q} $.
Expand again $ f = f_{J,1}(x_1,x_2) + \widetilde h(x_1,x_2) $ as in Equation \eqref{eq:expansion}. 
Pulling back to $B$ we have 
\[
	f(x_1,x_2) 
	= f(s^{w_1} x_1',  s^{w_2} x_2') 
	= s^\ell f_{J,1}(x_1',x_2') + s^{\ell+1} \widetilde h' (s, x_1',x_2') 
\]
with obvious notation $ \widetilde h' (s, x_1',x_2') $ and where $ \ell\in\IZ_+ $ is the positive integer introduced in Definition~\ref{Def:reduced_center}.
The fact that the term $ s^\ell $ (resp.~$ s^{\ell+1} $) factors from the pullback of $ f_{J,1} $ (resp.~$ \widetilde h) $ follows from the definition of these terms in Equation \eqref{eq:expansion}.

\begin{Def}\label{Def:proper}
	The \emph{proper transform of $ f $ 
	on $B$} is defined as 
	\[ 
		f'(s, x_1',x_2') 
		:= s^{-\ell} f(s^{w_1} x_1',  s^{w_2} x_2')  
		= f_{J,1}(x_1',x_2') + s \, \widetilde h (s, x_1',x_2').
	\]
\end{Def}

Restricting the transform to the fiber $s=1$, which is isomorphic to $S$, one obtains $f(x_1,x_2)$, which has invariant $(a_1,a_2)$ at the origin and smaller invariant  nearby. The same holds for all fibers where $s \neq 0$.
Restricting to the fiber $s=0$, which is the weighted normal cone, we obtain $f_{J,1}(x_1',x_2') $. 
By Corollary \ref{Cor:leading-term} we have again invariant  $(a_1,a_2)$ at the origin and smaller invariant nearby.  In other words, \emph{the invariant on any fiber of $B_+ \to \IA_\K^1$ drops.}

We can show more:

\begin{proof}[The order drops: Proof of Theorem \ref{Thm:Main1}\eqref{It:order-drops}]
Since the order $\nu$ is functorial for smooth morphisms, showing that the order drops on $B_+$ implies that it drops on the quotient $\Bl_{\bar J}(S) = [B_+/\IG_m]$. 
Note that $\nu_q(f) \leq \nu^{\log}_q(f)$, where the logarithmic order $\nu^{\log}$ is defined with respect to the exceptional divisor $\{s=0\}$: it is the order where $s\neq 0$ and otherwise it is the order of the restriction to $\{s=0\}$, namely the order of $f_{J,1}(x_1',x_2')$.

When $s\neq 0$ we simply are computing the order of $f$ away from $\{x_1=x_2=0\}$, which is $<\nu=a_1$ since $f$ is not a pure $\nu$-th power.

When $s=0$ we are computing the order of $f_{J,1}(x_1',x_2')$ away from $\{x_1'=x_2'=0\}$. But once again, $f_{J,1}(x_1',x_2') = x_1^{\nu} + \cdots$ is not a pure $\nu$-th power, see Corollary \ref{Cor:leading-term}. Thus its order of vanishing at any point on this locus is $<\nu=a_1$, as needed.
\end{proof}

\begin{Rk} In the case where $a_1=a_2$ and $w_1=w_2 = 1$ the fact that the order drops in the standard blow-up is well known. It is a consequence of a result of Hironaka--Mizutani, see \cite[Theorem~3.14]{CJS}
and \cite[Claim~4.2]{CS-Compl}.
\end{Rk}

	\begin{Rk}
		In the forthcoming work \cite{AQS} 
		we develop the invariant of ideals in higher dimension and show its smooth functoriality and upper semi-continuity on $k$-points. The first term in the invariant is always the order. 
		In particular this shows for plane curves that the invariant of $f'$ is $(a_1,a_2)$ along $ \{ x_1' = x_2' = 0 \}$ 
		and lower at points nearby. 
		Thus such invariant on $B_+$ is strictly lower than $(a_1,a_2)$, hence the
		invariant of $f'$ on the blow-up is strictly lower than the invariant of $f$.
	\end{Rk}

\begin{Obs}[The induction challenge]
	\label{Obs:challenge}
	The decrease of the invariant on $B_+$ and of the order on $\Bl_{\bar J}(S)$ would imply resolution of singularities for curves embedded in a surface if we could proceed by induction. 
	There is a problem though: 
	where the blow-up is not a Deligne--Mumford stack, the weighted blow-up is presented as a quotient of a 3-dimensional scheme, so induction does not apply, even though there is a $\IG_m$-action reducing to a 2-dimensional Artin stack. This is seen, for example, in
	\cite[Example~5.1]{Wlodarczyk-Rees} due to W{\l}odarczyk.

	We remark that, if $p\nmid w_i$ (which is in particular true if $ \car(\K) = 0 $) 
		one has a corresponding Deligne--Mumford chart by a surface, so the present argument applies directly without need of generalizing.
\end{Obs}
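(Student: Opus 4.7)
My plan is to read Observation~\ref{Obs:challenge} as asserting two justifiable claims and to prove each. The first is that when $p\nmid w_1w_2$, the ``order drops'' argument already given for $B_+$ upgrades without extra work to Theorem~\ref{Thm:Main2}\eqref{It:invariant-drops-nop}: the blow-up $\Bl_{\bar J}(S)$ is Deligne--Mumford and the lexicographic invariant satisfies $\inv_{C'}(q')<(a_1,a_2)$ at every $q'$ on the proper transform lying above $q$. The second is that when $p\mid w_i$, the two-dimensional machinery of Sections~\ref{Sec:inv}--\ref{Sec:transform} genuinely fails to propagate inductively, motivating the multi-weighted blow-up introduced in Section~\ref{Sec:multi}.

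For the first claim, I would begin by recording that when $p\nmid w_i$ the stabilizer groups $\Gmu_{w_1}$ (at $\{s=x_2'=0\}$) and $\Gmu_{w_2}$ (at $\{s=x_1'=0\}$), identified in the lemma of Section~\ref{Sec:weighted}, are étale group schemes. Consequently the slices $\{x_1'=1\}$ and $\{x_2'=1\}$ of the $\IG_m$-action on $B_+$ furnish étale-local presentations of $\Bl_{\bar J}(S)$ as a Deligne--Mumford stack, each by a smooth surface. I would then pull back the proper transform $f'$ of Definition~\ref{Def:proper} to each such chart; since order is functorial under smooth morphisms and descends along étale quotients, the strict inequality $\nu_{q'}(f')<a_1$ already proven on $B_+$ in the proof of Theorem~\ref{Thm:Main1}\eqref{It:order-drops} transports verbatim to each Deligne--Mumford chart. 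Because the invariant $(a_1,a_2)$ is lexicographically ordered with the order as its first entry, the strict drop of the first coordinate automatically forces $\inv_{C'}(q')<(a_1,a_2)$, without requiring any separate analysis of the second coordinate.

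For the second claim, when $p\mid w_i$ the group $\Gmu_{w_i}$ acquires a non-trivial infinitesimal part, so no étale surface chart is available; the only smooth presentation of $\Bl_{\bar J}(S)$ near the offending point is the three-dimensional $\IG_m$-quotient $[B_+/\IG_m]$. The invariant of Section~\ref{Sec:inv}, built from Hironaka's characteristic polyhedron of a hypersurface in a two-dimensional regular local ring, is intrinsically two-dimensional and does not directly descend to $B_+$. I would make this concrete by revisiting \cite[Example~5.1]{Wlodarczyk-Rees}, where the embedding dimension of the smooth cover genuinely jumps to three. The main obstacle I anticipate is \emph{not} in the positive half, which the lexicographic ordering handles robustly, but in giving a clean formulation of why the two-dimensional invariant cannot simply be patched up on $B_+$; this is precisely what motivates the canonical refinement of the fan carried out via the multi-weighted blow-up in the sequel.
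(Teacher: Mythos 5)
Your proposal is correct and follows the same route the paper takes to support this observation: the étale slices $\{x_i'=1\}$ give Deligne--Mumford surface charts exactly when $p\nmid w_i$ (the paper's remark on étale versus flat slices), the order drop on $B_+$ descends by smooth functoriality so the lexicographic invariant strictly decreases, and when $p\mid w_i$ only a flat slice exists and the sole smooth presentation is the three-dimensional cover $B_+$, as illustrated by W{\l}odarczyk's example. Since this is an observation rather than a formal theorem, the paper offers no further argument beyond what you have reconstructed.
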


Our goal here is to show that there is a way to resurrect the induction for curve singularities that can be embedded in a surface using a certain \emph{multi-weighted blow-up}
without having to introduce the invariant in higher dimension.

\section{Multi-weighted blow-ups}\label{Sec:multi} 
Since the inductive challenge only appears in positive characteristics, we assume from here on that the characteristic $p$ is positive.
Consider a tuple $\bw = (w_1,w_2)$
of coprime positive integers, and assume $p$ divides $w_1$
(so $p$ does not divide $w_2$). 
In contrast to earlier assumptions, we no longer assume that $w_1>w_2$. This allows us to treat the cases $p\,|\,w_i$ as one.

As noted earlier,  
the stack-theoretic weighted blow-up of $S$ 
along the reduced center
$\bar J= (x_1^{1/w_1},x_2^{1/w_2}) $
is an Artin stack, and not a  Deligne--Mumford stack. In this section, we propose a slightly different construction which resolves this issue.

We start from the case where $S = \IA_\K^2$, the affine plane, where we can use toric methods, in particular Geraschenko and Satriano's fantastacks to give a concrete description.

Recall from \cite[Example 2.10]{AQ} that the stack-theoretic weighted blow-up of $\IA_K^2$ at $\bar J$ can be viewed as the fantastack 
$\cX_{(\Sigma,\beta)}$ (in the sense of \cite[Section 4]{Toric1}) associated to the data $(\Sigma,\beta)$, where: \begin{enumerate}
	\item $\Sigma$ is the fan on $N = \IZ^2$ generated by the full-dimensional cones $\langle \be_1,\bw \rangle$ and $\langle \be_2,\bw \rangle$ in $\IR^2_{\geq 0}$, and
	\item letting $\Sigma(1)$ denote the set of rays in $\Sigma$, $\beta$ is the homomorphism $\IZ^{\Sigma(1)} \to N$ mapping each $\be_\rho$ (for $\rho \in \Sigma(1)$) to the first lattice point on the ray $\rho$.
\end{enumerate}
Since $\beta$ is canonically associated with $\Sigma$ in this case, we shall write $\cX_{(\Sigma,\beta)} = \cX_\Sigma$.

\begin{Con}\label{Con:multi-weighted-blow-up}
	To fix the aforementioned issue, we consider a new fan $\Sigma'$ on $N$ generated by the cones $\langle \be_1,\bw \rangle$, $\langle \bw,\bu \rangle$ and $\langle \be_2,\bu \rangle$ in $\IR^2_{\geq 0}$, where 
	\[
	\bu := (1,\kappa), \qquad \kappa := \lceil w_2/w_1 \rceil
	.
	\]
	As before, let $\Sigma'(1)$ denote the set of rays in $\Sigma'$, and let $\beta' \colon \IZ^{\Sigma'(1)} \to N$ be the homomorphism canonically associated with $\Sigma'$, which sends each $\be_\rho$ (for $\rho \in \Sigma'(1)$) to the first lattice point on the ray $\rho$. 
	
	We consider the fantastack $\cX_{\Sigma'} := \cX_{(\Sigma',\beta')}$ associated to $(\Sigma',\beta')$. To describe this fantastack, let $\widehat{\Sigma}'$ be the fan on $\IZ^{\Sigma'(1)}$ generated by the two-dimensional cones $\langle \be_{\be_1},\be_{\bw} \rangle$, $\langle \be_\bw,\be_\bu \rangle$, and $\langle \be_{\be_2},\be_{\bu} \rangle$ in $\IR^4_{\geq 0}$. 
	Observe that $\beta$ is compatible with the fans $\widehat{\Sigma}'$ and $\Sigma'$, and $\Sigma'$ is a subdivision of the standard fan on $\IR_{\geq 0}^2$ that is generated by $\langle \be_1,\be_2 \rangle$. Therefore, there are toric morphisms of toric varieties $X_{\widehat{\Sigma}'} \to X_{\Sigma'} \to \IA_\K^2$. As noted in \cite[Section~3]{Toric1}, this descends to the following: 
	\[ 
	\pi \colon \cX_{\Sigma'} := \left[X_{\widehat{\Sigma}'} \q G_{\beta'} \right] \longrightarrow \IA_\K^2, \qquad \textrm{where} \quad G_{\beta'} := \Ker\left(\Gm^4 \xrightarrow{T_{\beta'}} \Gm^2\right).
	\]
	where in the above expression: \begin{enumerate}
		\item[(i)] $X_{\widehat{\Sigma}'}$ is the toric variety associated to the fan $\widehat{\Sigma}'$ on $\IZ^4$.
		\item[(ii)] $T_{\beta'}$ is the homomorphism of tori induced by $\beta'$.
		\item[(iii)] $G_{\beta'}$ acts on $X_{\widehat{\Sigma}'}$ as a subgroup of $\Gm^4$.
	\end{enumerate}
	We refer to $\pi$ as the \emph{multi-weighted blow-up} of $\IA_\K^2$ with respect to $\Sigma'$, cf. \cite[Definition 2.5]{AQ}. To explicate this multi-weighted blow-up, we follow \cite[\S 2.1.11]{AQ}. The homomorphism $\beta \colon \IZ^4 \to N = \IZ^2$ fits in the short exact sequence: \[
	0 \to \IZ^2 \xrightarrow{\alpha \ = \ \tiny \begin{bmatrix}w_1 & 1 \\ w_2 & \kappa \\ -1 & 0 \\ 0 & -1\end{bmatrix}} \IZ^4 \xrightarrow{\footnotesize \beta \ = \ \begin{bmatrix}1 & 0 & w_1 & 1 \\ 0 & 1 & w_2 & \kappa\end{bmatrix}} \IZ^2 \to 0
	\]
	from which one deduces the following: \begin{enumerate}
		\item We have the commutative diagram: \[
		\begin{tikzcd}
			X_{\widehat{\Sigma}'} = \IA_\K^4 \smallsetminus V(J_{\Sigma'}) \arrow[to=1-2, hookrightarrow, "\textrm{\tiny{open}}"] \arrow[to=2-1, twoheadrightarrow, swap, "\textrm{\tiny{stack-theoretic quotient}}"] & \IA_\K^4  \arrow[to=1-3] & \IA_\K^2 \\
			\cX_{\Sigma'} = \left[X_{\widehat{\Sigma}'} \q \Gm^2\right] \arrow[to=1-3, swap, bend right=15, "\pi"] & &
		\end{tikzcd}
		\]
		where we give $\IA_\K^4$ the coordinates \[
		\qquad \qquad \IA_\K^4 = \Spec\bigl(k[x_1',x_2',s,u]\bigr)
		\]
		so that the above morphism $\IA_\K^4 \to \IA_\K^2$ of affine spaces corresponds to the homomorphism $k[x_1,x_2] \to k[x_1',x_2',s,u]$ which maps \[
		x_1 \mapsto x_1's^{w_1}u \qquad \textrm{and} \qquad x_2 \mapsto x_2's^{w_2}u^\kappa.
		\]
		Moreover, $J_{\Sigma'}$ is the ideal $(x_2'u,x_1'x_2',x_1's) \subset k[x_1',x_2',s,u]$, called the irrelevant ideal.
		
		\item The action of $G_{\beta'} \simeq \Gm^2$ on $X_{\widehat{\Sigma}'} \subset \IA_\K^4 = \Spec(k[x_1',x_2',s,u])$ can be interpreted from the matrix $\alpha$ as follows: 
			$(t_1,t_2)\in \IG_m^2$ acts on $(x_1',x_2',s,u)$ via
			$$(x_1',x_2',s,u)\quad\mapsto\quad (t_1^{w_1}t_2\,x_1',\,t_1^{w_2} t_2^\kappa\,x_2',\,t_1^{-1}\,s,\,t_2^{-1}\,u).$$
		\item The Cartier divisors $E_s := \left[V(s) \q \Gm^2\right]$ and $E_u := \left[V(u) \q \Gm^2\right]$ in $\cX_{\Sigma'}$ are the \emph{exceptional divisors} of the multi-weighted blow-up $\pi$. The complement of $E_s \cup E_u$ in $\cX_{\Sigma'}$ maps isomorphically under $\pi$ onto $\IA_\K^2 \smallsetminus V(x_1,x_2)$.
	\end{enumerate}
	It is convenient to package the above observations in the following compact form: 
	\[
	\left[\Spec_{\IA_\K^2}\left(\frac{\cO_{\IA_\K^2}[x_1',x_2',s,u]}{(x_1's^{w_1}u - x_1,x_2's^{w_2}u^k - x_2)}\right) \smallsetminus V(x_2'u,x_1'x_2',x_1's) \qq\alpha \Gm^2\right] \xrightarrow{\pi} \IA_\K^2 = \Spec(k[x_1,x_2]),
	\]
	where the subscript $\alpha$ indicates the $\IG_m$-action above. For later use we introduce the notation $\tilde R'_0:=\cO_{\IA_\K^2}[x_1',x_2',s,u]/{(x_1's^{w_1}u - x_1,x_2's^{w_2}u^k - x_2)}$ for the algebra appearing above. Its global sections form the ring $k[x_1',x_2',s,u]$.
\end{Con}

\begin{Ex}
	\label{Ex:wb_vs_mulit_wb}
	If $p=3$ and $\bw = (3,2)$, we have $\bu = (1,1)$ and the  fans $\Sigma$, $\Sigma'$ of Figure~\ref{Fig:multiweighted}.
\begin{figure}[h]
	\[
	\begin{tikzpicture}[scale=0.8]
		
		\filldraw[gray] (0,0)--(5.25,3.5)--(0,3.5);
		\filldraw[lightgray] (0,0)--(5.25,3.5)--(5.25,0)--(0,0);
		
		\draw[->, thick] (0,0)--(0,4.1); 
		\draw[<->, thick] (0,0)--(6,0); 
		
		\node at (6.6,-0.1) {$\langle \be_1 \rangle$};
		\node at (-0.7,4.1) {$\langle \be_2 \rangle$};
		\node at (5.7,3.7) {$\langle \bw \rangle$};
		
		\foreach \x in {1,...,5} 
		\draw (\x,0.1) -- (\x,-0.1);
		
		\foreach \y in {1,...,3} 
		\draw (0.1,\y) -- (-0.1,\y);
		
		
		\foreach \x in {1,...,5}
		\draw[dotted] (\x,0) -- (\x,4.1);
		\foreach \y in {1,...,3}
		\draw[dotted] (0,\y) -- (6,\y);
		
		\draw[ultra thick, ->] (0,0)--(5.25,3.5);
		
		
		\node at (8,2) {\Large $\rightsquigarrow$};
		\node at (3,-0.7) {$\Sigma$};
		\node at (13,-0.7) {$\Sigma'$};
		
		\filldraw[lightgray] (10,0)--(13.5,3.5)--(10,3.5)--(10,0);
		\filldraw[gray] (10,0)--(13.5,3.5)--(15.25,3.5)--(10,0);
		\filldraw[lightgray] (10,0)--(15.25,3.5)--(15.25,0)--(10,0);
		
		\draw[->, thick] (10,0)--(10,4.1); 
		\draw[<->, thick] (10,0)--(16,0); 
		
		\node at (16.6,-0.1) {$\langle \be_1 \rangle$};
		\node at (9.3,4.1) {$\langle \be_2 \rangle$};
		\node at (15.7,3.7) {$\langle \bw \rangle$};
		\node at (13.8,3.9) {$\langle \bu \rangle$};
		
		\foreach \x in {11,...,15} 
		\draw (\x,0.1) -- (\x,-0.1);
		
		\foreach \y in {1,...,3} 
		\draw (10.1,\y) -- (9.9,\y);
		
		
		\foreach \x in {11,...,15}
		\draw[dotted] (\x,0) -- (\x,4.1);
		\foreach \y in {1,...,3}
		\draw[dotted] (10,\y) -- (16,\y);
		
		\draw[ultra thick, ->] (10,0)--(13.5,3.5);
		\draw[ultra thick, ->] (10,0)--(15.25,3.5);

		\filldraw (3,2) circle (2pt);
		\filldraw (13,2) circle (2pt);
		\filldraw (11,1) circle (2pt);

	\end{tikzpicture} 
	\]\caption{} \label{Fig:multiweighted}
	\end{figure}
\end{Ex}

\begin{Lem}
	$\cX_{\Sigma'}$ is a regular Deligne--Mumford stack, and $\pi$ is a proper, birational morphism.
\end{Lem}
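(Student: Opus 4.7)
My plan is to read off each of the four claims directly from the toric description of $\cX_{\Sigma'}$ in Construction~\ref{Con:multi-weighted-blow-up}.

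\emph{Regularity.} First I would observe that the smooth cover $X_{\widehat\Sigma'}$ is the complement of $V(J_{\Sigma'})$ inside $\IA_\K^4 = \Spec(k[x_1',x_2',s,u])$, hence is a regular scheme, and $\cX_{\Sigma'}$ is presented as its quotient by the diagonalizable group $G_{\beta'} \simeq \IG_m^2$ acting as in item~(2) of the construction. A quotient of a regular scheme by a smooth group action is a regular stack, which gives the first assertion.

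\emph{Deligne--Mumford property.} The key step is to check that each stabilizer of the $G_{\beta'}$-action on $X_{\widehat\Sigma'}$ is a finite étale group scheme, since this is exactly what separates a DM stack from a general Artin stack. For a fantastack built from a simplicial fan in dimension two, this reduces to verifying that, at each of the three maximal cones of $\Sigma'$, the determinant of the $2\times 2$ matrix formed by the minimal ray generators is a unit in $k$. A direct computation gives
\[
\det\!\begin{pmatrix} 1 & w_1 \\ 0 & w_2 \end{pmatrix}=w_2,\qquad \det\!\begin{pmatrix} w_1 & 1 \\ w_2 & \kappa \end{pmatrix}=w_1\kappa-w_2,\qquad \det\!\begin{pmatrix} 0 & 1 \\ 1 & \kappa \end{pmatrix}=-1.
\]
Since $p\mid w_1$ and $\gcd(w_1,w_2)=1$ we have $p\nmid w_2$, so the first determinant is a unit; for the middle one, $w_1\kappa-w_2 \equiv -w_2 \pmod{p}$, again a unit; and the third is $\pm 1$. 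This is the step that justifies the exact choice $\bu=(1,\kappa)$ --- it is engineered precisely to make the three determinants simultaneously coprime to $p$, and is the only substantive point in the whole lemma.

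\emph{Properness and birationality.} The support of the fan $\Sigma'$ equals the standard cone $\langle \be_1,\be_2\rangle=\IR_{\geq 0}^2$ underlying the toric variety $\IA_\K^2$, so $\Sigma'$ is a refinement with equal support and the induced toric morphism of coarse spaces $X_{\Sigma'}\to\IA_\K^2$ is proper. Since $\cX_{\Sigma'}$ is now known to be a DM stack with finite diagonal, the coarse-moduli morphism $\cX_{\Sigma'}\to X_{\Sigma'}$ is also proper, and properness of $\pi$ follows by composition. Finally, birationality is immediate from item~(3) of the construction: $\pi$ identifies $\cX_{\Sigma'}\smallsetminus(E_s\cup E_u)$ with $\IA_\K^2\smallsetminus V(x_1,x_2)$, and both are dense.
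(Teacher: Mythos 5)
Your proposal is correct and follows essentially the same route as the paper: the determinants $w_2$, $w_1\kappa-w_2$, $\pm 1$ you compute are exactly the orders of the stabilizers $\Gmu_{w_2}$, $\Gmu_{w_2-\kappa w_1}$, and the trivial group that the paper exhibits by writing each of the three charts explicitly as a finite-group quotient of a smooth surface (via \cite[Lemma 1.3.1]{Quek-Rydh}), and your properness and birationality arguments (coarse moduli map followed by the toric morphism from the refinement $\Sigma'$ of the standard cone, plus item (3) of the construction) coincide with the paper's.
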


\begin{proof}
	$\cX_{\Sigma'}$ admits an open cover by three open substacks, 
	referred to as \emph{charts}, corresponding to the three maximal cones 
	of $\Sigma'$. They are: \begin{align*}
		D_+(\langle \be_1,\bw \rangle) &= \left[\Spec(k[x_1',x_2',s,u][x_2'^{-1},u^{-1}]) \q \Gm^2\right] \simeq [\Spec(k[x_1',s]) \q \Gmu_{w_2}] \\
		D_+(\langle \bw,\bu \rangle) &= \left[\Spec(k[x_1',x_2',s,u][x_1'^{-1},x_2'^{-1}]) \q \Gm^2\right] \simeq [\Spec(k[s,u]) \q \Gmu_{w_2-kw_1}] \\
		D_+(\langle \bu,\be_2 \rangle) &= \left[\Spec(k[x_1',x_2',s,u][x_1'^{-1},s^{-1}]) \q \Gm^2\right] \simeq \Spec(k[x_2',u]).
	\end{align*}
	where: \begin{enumerate}
		\item the isomorphisms are supplied by \cite[Lemma 1.3.1]{Quek-Rydh},
		\item  $\zeta\in \Gmu_{w_2}$ acts on $\Spec(k[x_1',s])$ via $(x_1',s) \mapsto (\zeta^{w_1} x_1', \zeta^{-1}s)$.
		\item $\xi \in\Gmu_{w_2-\kappa w_1}$ acts on $\Spec(k[s,u])$ via $(s,u) \mapsto (\xi^{-1}s, \xi^{w_1}u)$.
	\end{enumerate}
	Since we have $ p \mid w_1 $ and $ p\nmid w_2 $, we have presented each chart as the stack-theoretic quotient of a smooth $k$-scheme under the action of a finite group whose order is invertible in $k$. So
	 	$\cX_{\Sigma'}$ is a smooth tame Deligne--Mumford stack over $k$, see \cite[Definition 2.3.1]{Abramovich-Vistoli}.
	 We noted $\pi$ is birational in Construction~\ref{Con:multi-weighted-blow-up}(3), and $\pi$ is proper because it can be factored into the composition of a coarse moduli space morphism $\cX_{\Sigma'} \to X_{\Sigma'}$ which is proper, followed by a toric morphism $X_{\Sigma'} \to \IA_\K^2$ which is also proper.
\end{proof}

Consider more generally a reduced center
 $\bar J = (x_1^{1/w_1},x_2^{1/w_2})$ on a regular surface $S$ defined over $k$, where $w_1,w_2$ are coprime integers, with the characteristic $p>0$ of $k$ dividing $w_1$. As before, the weighted blow-up of $S$ along $\bar J$ is an Artin stack, not a  Deligne--Mumford stack. To resolve this issue, we adapt the ideas developed in the  setting of Construction~\ref{Con:multi-weighted-blow-up}.

To motivate the forthcoming construction, recall from Definition \ref{Def:weighted} and the local Rees algebra presentation of the weighted blow-up in Equation \eqref{eq:presentation} that
\[
\Bl_{\bar J}(S) = \left[\Spec_S\left(\frac{\cO_S[x_1',x_2',s]}{(x_1's^{w_1} - x_1,x_2's^{w_2} - x_2)}\right) \smallsetminus V(x_1',x_2')\ \bigg/ \ \Gm\right] \to S,
\]
taking into account Remark \ref{Rk:local}.

\begin{Con}[Multi-weighted blow-up]
	We associate to $\bar J = (x_1^{1/w_1},x_2^{1/w_2})$ a \emph{multi-graded Rees algebra} $\tilde R'$
	on $S$ (in the sense of \cite[Section 2.4]{AQ}) and its associated multi-weighted blow-up. 
	
	We define $\tilde R'$ to be the $\IZ^2$-graded $\cO_S$-subalgebra
	of $\cO_S[s^{\pm 1},u^{\pm 1}]$, which at $q$ is generated by $s, u, x_1' := x_1s^{-w_1}u^{-1}$ and $x_2' := x_2s^{-w_2}u^{-\kappa}$, where $\kappa = \lceil w_2/w_1 \rceil$, and equals the full algebra $\cO_S[s^{\pm 1},u^{\pm 1}]$ elsewhere. The bi-grading of $s,u,x_1',x_2'$ is $(-1,0), (0,-1), (w_1,1)$, and $(w_2,\kappa)$, respectively.

		Finally consider 
		\[
		S'':= \left[\Spec_S(\tilde R') \smallsetminus V(x_2'u,x_1'x_2',x_1's) \ \big/ \ \Gm^2\right] \xrightarrow{\pi} S.
		\]

\end{Con}

	\begin{Lem}
		We have an isomorphism of $\IZ^2$-graded $\cO_{S,q}$-algebras: \[
		\tilde R'_q \simeq \frac{\cO_{S,q}[x_1',x_2',s,u]}{(x_1's^{w_1}u-x_1\,,\ x_2's^{w_2}u^\kappa - x_2)}
		\]
		where the $\IZ^2$-gradings on $x_1'$, $x_2'$, $s$, $u$ are given by $(w_1,1)$, $(w_2,\kappa)$, $(-1,0)$, $(0,-1)$ respectively. In particular it is the pullback of $\tilde R'_0$ along the morphism $S_q \to \IA_\K^2$ defined by $(x_1,x_2)$. 
	\end{Lem}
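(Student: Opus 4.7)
The plan is first to construct a natural $\IZ^2$-graded $\cO_{S,q}$-algebra surjection onto $\tilde R'_q$, then to establish injectivity via a Koszul-style argument using that $(x_1, x_2)$ is a regular sequence in $\cO_{S,q}$. I would define
\[
\bar\phi \colon \frac{\cO_{S,q}[x_1',x_2',s,u]}{(x_1's^{w_1}u - x_1,\ x_2's^{w_2}u^\kappa - x_2)} \longrightarrow \cO_{S,q}[s^{\pm 1}, u^{\pm 1}]
\]
by $x_1' \mapsto x_1 s^{-w_1} u^{-1}$, $x_2' \mapsto x_2 s^{-w_2} u^{-\kappa}$, $s \mapsto s$, $u \mapsto u$. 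Well-definedness, the $\IZ^2$-grading, and surjectivity onto $\tilde R'_q$ are immediate from the defining relations, the specified bidegrees, and the choice of generators of $\tilde R'_q$.

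For injectivity, the approach is to show that $s$ and $u$ are non-zerodivisors on the source ring $A := \cO_{S,q}[x_1',x_2',s,u]/I$, with $I := (x_1's^{w_1}u - x_1,\ x_2's^{w_2}u^\kappa - x_2)$; then $A$ embeds into $A[s^{-1}, u^{-1}]$. After inverting $s$ and $u$ the two relations force $x_1' = x_1 s^{-w_1} u^{-1}$ and $x_2' = x_2 s^{-w_2} u^{-\kappa}$, so $A[s^{-1}, u^{-1}] \cong \cO_{S,q}[s^{\pm 1}, u^{\pm 1}]$ via the localization of $\bar\phi$; composing recovers $\bar\phi$ and yields the injectivity.

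The main obstacle will be verifying the non-zerodivisor property of $s$; the case of $u$ is entirely symmetric. Starting from an identity $s f = P(x_1's^{w_1}u - x_1) + Q(x_2's^{w_2}u^\kappa - x_2)$ in $\cO_{S,q}[x_1',x_2',s,u]$, I would set $s = 0$ and use that $(x_1, x_2)$ remains a regular sequence in the polynomial extension $\cO_{S,q}[x_1',x_2',u]$, so that the Koszul syzygy supplies an element $T$ with $P|_{s=0} = T x_2$ and $Q|_{s=0} = -T x_1$. Writing $P = T x_2 + s \tilde P$ and $Q = -T x_1 + s \tilde Q$, substituting back into the original identity, and cancelling one factor of $s$ reduces the problem to showing that
\[
g \,:=\, x_2 x_1' s^{w_1-1} u - x_1 x_2' s^{w_2-1} u^\kappa
\]
lies in $I$; this is a direct reduction modulo $I$ using $x_1 \equiv x_1's^{w_1}u$ and $x_2 \equiv x_2's^{w_2}u^\kappa$, which yields the same monomial $x_1' x_2' s^{w_1+w_2-1} u^{\kappa+1}$ with opposite signs. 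The pullback statement then follows tautologically, since the right-hand side is by construction $\cO_{S,q} \otimes_{k[x_1,x_2]} \tilde R'_0$ along the map $x_i \mapsto x_i$ defining $S_q \to \IA_\K^2$.
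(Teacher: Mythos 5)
Your proof is correct, and it reaches the conclusion by a genuinely different route than the paper. Both arguments begin identically: the relations give a surjection from $\cO_{S,q}[x_1',x_2',s,u]/I$ onto $\tilde R'_q$ (and the grading and pullback claims are indeed immediate). For injectivity, the paper argues abstractly that the source is an integral $4$-dimensional algebra surjecting onto a $4$-dimensional algebra, so the kernel must vanish; you instead prove directly that $s$ and $u$ are non-zerodivisors on the source, embed it into its localization $A[s^{-1},u^{-1}]\simeq \cO_{S,q}[s^{\pm1},u^{\pm1}]$, and identify the composite with $\bar\phi$. Your Koszul step is sound: $(x_1,x_2)$ is a regular sequence in the regular local ring $\cO_{S,q}$, hence remains one in the flat extension $\cO_{S,q}[x_1',x_2',u]$, so the syzygy $P|_{s=0}x_1+Q|_{s=0}x_2=0$ is a multiple of the Koszul relation; the reduction to $g\in I$ and the verification $g\equiv 0 \pmod I$ are correct, and the case of $u$ works because both $u$-exponents in the relations are at least $1$ (as $\kappa\geq 1$). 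What your approach buys is self-containedness: the paper's dimension-count argument takes for granted that the source ring is a domain (i.e., that $I$ is prime), which is itself not obvious, whereas your argument \emph{proves} this as a byproduct, since the source embeds into the domain $\cO_{S,q}[s^{\pm1},u^{\pm1}]$. The cost is a longer, more computational verification. Either way the statement is established.
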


	\begin{proof}
		The indicated relations $x_1's^{w_1}u=x_1,\ x_2's^{w_2}u^k =x_2$ follow from the construction of $x_1', x_2'$. This provides a surjection ${\cO_{S,q}[x_1',x_2',s,u]}/{(x_1's^{w_1}u-x_1, x_2's^{w_2}u^\kappa - x_2)} \to \tilde R'_q$. 
		 Both algebras are $4$-dimensional and the algebra on the left is integral, hence the homomorphism is injective as well, as needed.
	\end{proof}

	\begin{Lem}
		$\tilde R'$ and $S''$ are  well-defined, independent of the choice of $x_1$ and $x_2$.
	\end{Lem}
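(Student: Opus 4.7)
The plan is to reduce to the stalk at $q$ using the preceding lemma's explicit local presentation, and then show that the resulting subalgebra of $\cO_{S,q}[s^{\pm 1},u^{\pm 1}]$ is unchanged when $(x_1,x_2)$ is replaced by another regular system of parameters $(y_1,y_2)$ with $\bar J = (y_1^{1/w_1},y_2^{1/w_2})$ representing the same monomial valuation. Writing $v := v_{\bar J}$ (so $v(x_i) = w_i$), the goal is to prove that $y_i' := y_i\, s^{-w_i}u^{-\kappa_i} \in \tilde{R}'_q$ for $i = 1, 2$, where $\kappa_1 := 1$ and $\kappa_2 := \kappa$; the equality $\tilde{R}' = \tilde{R}''$ then follows by symmetry.

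The key technical step is the containment of ideals in $\cO_{S,q}$,
\[
\cI_{w_i}\ :=\ \{g \in \cO_{S,q} : v(g) \geq w_i\}\ \subset\ I_{w_i,\kappa_i}\ :=\ \bigl(\,x_1^a x_2^b \,:\, aw_1 + bw_2 \geq w_i,\ a + b\kappa \geq \kappa_i\,\bigr).
\]
Since $v$ is monomial, an element $g \in \cI_{w_i}$ decomposes (in the completion $\widehat{\cO}_{S,q} = k[[x_1,x_2]]$) as a sum of monomials $x_1^a x_2^b$ each satisfying $aw_1 + bw_2 \geq w_i$, so it suffices to verify that any such $(a,b) \in \IZ_{\geq 0}^2$ also satisfies $a + b\kappa \geq \kappa_i$. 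A short case analysis handles this: if $\kappa_i = 1$, the inequality amounts to $(a,b) \neq (0,0)$, which holds because $aw_1 + bw_2 \geq w_i \geq 1$; if $\kappa_i = \kappa$, the subcase $b \geq 1$ is automatic, while $b = 0$ forces $aw_1 \geq w_2$, hence $a \geq \lceil w_2/w_1 \rceil = \kappa$. Granting this, $v(y_i) = w_i$ puts $y_i \in \cI_{w_i} \subset I_{w_i,\kappa_i}$, which is exactly the condition for $y_i' \in \tilde{R}'_q$.

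By symmetry $x_1', x_2'$ lie in the algebra $\tilde{R}''_q$ associated to $(y_1,y_2)$, so $\tilde{R}' = \tilde{R}''$ and $\tilde{R}'$ is well-defined. For $S''$, the $\IZ^2$-grading on $\tilde{R}'$ is intrinsic, hence so is the $\IG_m^2$-action; and the three generators $x_2'u$, $x_1'x_2'$, $x_1's$ of the irrelevant ideal correspond to the three two-dimensional cones of the refined fan $\Sigma'$, combinatorial data depending only on $(w_1,w_2,\kappa)$. The same containment $\cI_{\bullet} \subset I_{\bullet,\bullet}$, applied in the appropriate bi-degrees to the differences $y_i' - c_i x_i'$ (where $c_i$ is the non-zero scalar $y_i/x_i \bmod \mm$), shows the two presentations of the irrelevant ideal agree, so $S''$ is well-defined as well. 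The main obstacle I anticipate is the bookkeeping around the monomial containment $\cI_{w_i} \subset I_{w_i,\kappa_i}$ --- routine but requiring the split into $\kappa_i = 1$ versus $\kappa_i = \kappa$; with that in hand, the remaining identifications are formal.
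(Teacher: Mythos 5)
Your argument is correct in substance but follows a genuinely different route from the paper. The paper does not compare generators directly: it exhibits a single canonically defined valuation ideal $I' = I_{w_1w_2+w_2} = (x_1^{w_2+\kappa},x_1^{w_2}x_2,x_2^{w_1+1})^{\Int}$, observes that its dual (Newton) fan is $\Sigma'$ in any admissible coordinates --- which pins down $\tilde R'$ --- and then identifies the irrelevant ideal as the radical of the controlled transform of $I'$, so that both objects are canonical at one stroke. Your approach instead verifies by hand that the generators $y_i' = y_i s^{-w_i}u^{-\kappa_i}$ of the competing algebra lie in $\tilde R'_q$, reducing to the monomial containment $\cI_{w_i}\subset I_{w_i,\kappa_i}$; that containment and its case analysis ($\kappa_i=1$ versus $\kappa_i=\kappa$, $b=0$ versus $b\geq 1$) are correct, and the symmetry argument then gives well-definedness of $\tilde R'$ together with its grading. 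This is more elementary and arguably more transparent about \emph{why} the extra condition $a+b\kappa\geq\kappa_i$ is automatic, at the cost of losing the conceptual explanation the paper's $I'$ provides.

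The one place where your write-up is thinner than it should be is the irrelevant ideal. Saying the generators ``correspond to the three cones of $\Sigma'$'' does not by itself give independence of coordinates, since the identification of $\Spec_S(\tilde R')$ with the toric model uses the coordinates; and the assertion that the containment ``applied to $y_i'-c_ix_i'$ shows the two presentations agree'' needs to be unpacked. What one actually has to check is that $V(y_2'u,\,y_1'y_2',\,y_1's)$ contains each of the three strata $V(x_1',x_2')$, $V(x_2',s)$, $V(x_1',u)$. This does work, but it uses slightly finer information than $v(h)\geq w_i$: writing $y_2=\gamma x_2+h_2$, every monomial of $h_2$ has $v$-value $\geq w_2+1$ (the only value-$w_2$ monomial is $x_2$ itself, since $w_1\geq p\geq 2$ cannot divide $w_2$), so every correction term to $x_2'$ carries a positive power of $s$, which is what makes $y_2'$ vanish on $V(x_2',s)$; similarly the correction terms to $x_1'$ with $a=0$ carry a positive power of $u$ unless $\kappa=1$ and $w_2\geq w_1$, a case excluded by coprimality and $p\mid w_1$. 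None of this is hard, but it is exactly the bookkeeping your last paragraph waves at, and it is the step the paper's controlled-transform argument is designed to avoid.
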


\begin{figure}[h]
\begin{tikzpicture}[scale=0.6]
	
	
	\draw[->, thick] (0,0)--(8.5,0); 
	\draw[->, thick] (0,0)--(0, 6.75,0); 
	
	
	\node at (-0.7,5) {\small $ x_2^{w_1} $};
	
	\node at (6, -0.7) {\small $ x_1^{w_2} $};
	\node at (8.6, 0.6) {\small $ x_1^{w_2+\kappa} $};
	
	\node at (6.3,1.7) {\small $ x_1^{w_2} x_2 $};
	\node at (-1, 6) {\small $ x_2^{w_1+1} $};

	\foreach \x in {1,...,8} 
	\draw (\x,0.1) -- (\x,-0.1);
	
	\foreach \y in {1,...,6} 
	\draw (0.1,\y) -- (-0.1,\y);

	\foreach \x in {1,...,8}
	\draw[dotted, gray] (\x,0) -- (\x,6.4);
	\foreach \y in {1,...,6}
	\draw[dotted, gray] (0,\y) -- (8.3,\y);
	
	\draw[very thick, dashed] (0,5)--(6,0);
	\draw[very thick, -] (0,6)--(6,1)--(8,0);
	\draw[very thick, dotted] (6,1)--(8.4,-1);
	
	\node at (10.5,-1) {\small $ v_{\bar J} = (w_1+1)w_2 $};
	\node at (2.4,1.5) {\small $ v_{\bar J} = w_1 w_2 $};
	
	\filldraw (0,6) circle (3pt);
	\filldraw (6,1) circle (3pt);
	\filldraw (8,0) circle (3pt);
	
	\draw[->, thick, blue] (7,0.5)--(8,2.5); 
	\node[blue] at (8.2,3) {\small $ (1,\kappa)$};
	
	\draw[->, thick, blue] (3,3.5)--(4.75,5.6); 
	\node[blue] at (5,6) {\small $ \bw $};

\end{tikzpicture}
\caption{} 
\label{Fig:multi}
\end{figure}

	\begin{proof}
		Since the valuation $v_{\bar J}$ is canonically associated to $\bar J$, which we have shown to be well-defined, it suffices to show that $\tilde R'$ is canonically associated to $v_{\bar J}$. Also, for any choice of $x_1$ and  $x_2$ the multi-weighted $\tilde R'$ is the center associated to the stacky fan $(\Sigma',\beta')$, which itself is canonically associated to its underlying fan $\Sigma'$.
				
		We show that there is an ideal $I'$, canonically associated with $v_{\bar J}$,  such that for any choice of parameters $x_1, x_2$ such that $J = (x_1^{a_1},x_2^{a_2})$, the fan of the blow-up $\Bl_{I'}(S) \to S$ is $ \Sigma'$. In other words, $\tilde R'$ is independent of the choice of $x_1$ and  $x_2$ and depends only on $\bar J$, as needed.
		
		For any positive $r$ one defines $$I_r = \{g\in \cO_{S,q} \, : \, v_{\bar J}(g) \geq r\}.$$ This is an integrally closed ideal, monomial in any choice of $x_1, x_2$, canonically associated to $\bar J$. For instance we have $I_{w_1w_2} = (x_1^{w_2}, x_2^{w_1})^{\Int}$.  
		
		One identifies the ideal $I':= I_{w_1w_2+w_2} = (x_1^{w_2+\kappa}, x_1^{w_2}x_2, x_2^{w_1+1})^{\Int}$, see Figure \ref{Fig:multi}, whose dual fan is indeed $ \Sigma'$, as needed.

This implies also that $\Spec_S(\tilde R')$ is independent of choices. To show the same for $S''$, we prove that the irrelevant ideal $(x_2'u,x_1'x_2',x_1's)$ is independent of choices, by showing that it is naturally induced by the ideal $I'$ above. Indeed, observe that
$I' = (x_1^{w_2+\kappa},x_2^{w_1+1},x_1^{w_2}x_2)^{\Int}$
pulls back on $\Spec_S(\tilde R')$  to
$$s^{w_1w_2+w_2}u^{w_2+\kappa} \cdot (x_1'^{w_2+\kappa}s^{\kappa w_1-w_2},x_2'^{w_1+1}u^{\kappa w_1-w_2},x_1'^{w_2}x_2')^{\Int},$$
i.e. the controlled  transform 
$(x_1'^{w_2+\kappa}s^{\kappa w_1-w_2},x_2'^{w_1+1}u^{\kappa w_1-w_2},x_1'^{w_2}x_2')^{\Int} $
of $I'$
is independent of choices. 
Consequently the radical of this transform, which is $(x_1's,x_2'u,x_1'x_2')$, is independent of choices.
\end{proof}

\section{The invariant drops}\label{Sec:drops}

Having introduced the multi-weighted blow-up $ \pi \colon S'' \longrightarrow S $,
we show that the invariant drops, in fact the order of vanishing drops,  at every point above  $q$.
This provides Theorem~\ref{Thm:Main2}\eqref{It:invariant-drops-p}
and hence concludes the proof of our main theorems.

Recall that  we consider
$ \bw = (w_1, w_2) $ and, without loss of generality, $ p \mid w_1 $ 
(and $ p \nmid w_2 $ since $ w_1, w_2 $ are coprime).
As in Section \ref{Sec:multi}, and in contrast to earlier sections, we allow both $ w_1 > w_2 $ and $ w_1 < w_2 $.

Further recall that the multi-weighted blow-up $ \pi $ is given by the transformation
\[
	x_1 \mapsto x_1's^{w_1}u \qquad \textrm{and} \qquad x_2 \mapsto x_2's^{w_2}u^\kappa 
\]
where 
$ \kappa = \lceil w_2/w_1 \rceil $.

	In analogy to Definition \ref{Def:proper}, we introduce:
	\begin{Def} The \emph{proper transform}  $f'$ of $f$  
	is determined by
			$$f( x_1's^{w_1} u, \,  x_2's^{w_2} u^\kappa ) \ \ = \ \ s^\alpha u^\beta f'(x_1',x_2',s,u), \qquad s,u\ \nmid\ f'(x_1',x_2',s,u).$$
	\end{Def}

For detecting the improvement of the singularity in the different charts of $ \pi $,
the initial forms of the local generator $ f $ of $ C $ at $ q $ with respect to the vectors $ \bw $ and $ \bu =  (1,\kappa ) $ play an essential role.
Hence, let us introduce this notion.

\begin{Def}
	Let $ (\cO, \mm, \K) $ be a $ 2 $-dimensional regular local ring containing its residue field $ \K $.
	Let $ (x, y ) $ be regular system of parameters for $ \cO $. 
	Let $ f \in \cO \smallsetminus \{ 0  \}$ be a non-zero element with $ \mm $-adic expansion
	$ f = \sum_{(a,b) \in \IZ_{\geq 0}^2} c_{a,b} x^a y^b $ with $ c_{a,b} \in \K $.
	For an vector $ \bv = (v_1, v_2) \in \IZ_{\geq 0}^2 $,
	the  {\em $ \bv $-order $ \nu_\bv (f) $} and
	the {\em $\bv $-initial form  $ \ini_\bv (f) $ of $ f $} are defined as
	\[
		\nu_\bv (f) := \inf\{ a v_1 + b v_2  \mid (a,b) \in \IZ_{\geq 0}^2 : c_{a,b} \neq 0 \}
	\]
	and
	\[
		\ini_\bv (f) := \sum_{(a,b) \, :  \, a v_1 + b v_2 = \nu_\bv (f) } c_{a,b} X^a Y^b \in \K[X,Y].
	\] 
	In analogy with Definition \ref{Def:Fnu_Fdelta}, we define the lift of $ \ini_\bv(f) $ to $ \cO $ by
	\[
		f_\bv :=  \sum_{(a,b) \, :  \, a v_1 + b v_2 = \nu_\bv (f) } c_{a,b} x^a y^b \in \cO.
	\]
\end{Def}

The  lift $ f_\bv $ is the sum over those terms in the expansion of $ f $
which contribute to the face of the Newton polyhedron of $ f $ whose supporting hyperplane has normal vector $ \bv $.

We illustrate how this notion is used to detect the strict decrease of the order in the following examples.

\begin{Ex}
	\label{Ex:Dan}
	Let $ \K =\IF_3 $, $ q \in \IA^2$ the origin and $ C = \{ f = 0 \} \subset \IA^2 $ for  
	\[
		f = x_2^7 + x_1^4 x_2^4 + x_1^7 x_2^2 + x_1^9 x_2 + x_1^{11} + x_1^{6} x_2^3.  
	\]
	In this example, $a_2 = 7 < a_1 = 28/3$, so $ \bw = (w_1, w_2) = (3,4) $ (thus $ p = 3 \mid w_1 $, as assumed)
	and $ \bu = (1, \kappa) = (1,2) $.
	The Newton polyhedron of $ f $ is shown in Figure~\ref{Fig:example_initials}.
	In particular, the supporting hyperplanes defined by $ \bw $ resp.~$ \bv $ are marked.
	Observe that the intersection point of these two hyperplanes is {\em not} a lattice point, i.e., it is not in $ \IZ_{\geq 0}^2 $.
	
	\begin{figure}[h]
		\begin{tikzpicture}[scale=0.8]
			
			\filldraw[lightgray] (11.5,0)--(11,0)--(9,1)--(7,2)--(4,4)--(0,7)--(0,7.5)--(11.5,7.5);
			
			\draw[->, thick] (0,0)--(11.75,0); 
			\draw[->, thick] (0,0)--(0, 7.75); 
			
			\node at (12.3,-0.1) {$ x_1 $};
			\node at (-0.5,7.8) {$ x_2 $};
			
			\node at (-1,7) {$a_2 = 7 $};
			\node at (-0.4,4) {$ 4 $};
			\node at (-0.4,3) {$ 3 $};
			\node at (-0.4,2) {$ 2 $};
			\node at (-0.4,1) {$ 1 $};
			
			\node at (4, -0.4) {$ 4 $};
			\node at (6, -0.4) {$ 6 $};
			\node at (7, -0.4) {$ 7 $};
                \node at (9.33, -0.6) {$a_1 = 28/3$};
			\node at (11, -0.4) {$ 11 $};
			
			\foreach \x in {1,...,11} 
			\draw (\x,0.1) -- (\x,-0.1);

                \draw (9.33,0.2)--(9.33,-0.2);
			
			\foreach \y in {1,...,7} 
			\draw (0.1,\y) -- (-0.1,\y);

			\foreach \x in {1,...,11}
			\draw[dotted, gray] (\x,0) -- (\x,7.6);
			\foreach \y in {1,...,7}
			\draw[dotted, gray] (0,\y) -- (11.6,\y);
			
			\draw[thick, dotted] (11.5,0)--(11,0)--(9,1)--(7,2)--(4,4)--(0,7)--(0,7.5);
			\draw[very thick, dashed] (0,7)--(10,-0.5);
			\draw[very thick] (11,0)--(5,3);
			
			\filldraw (11,0) circle (3pt);
			\filldraw (9,1) circle (3pt);
			\filldraw (7,2) circle (3pt);
			\filldraw (4,4) circle (3pt);
			\filldraw (0,7) circle (3pt);
			\filldraw (6,3) circle (3pt);
			
			\draw[->, thick, blue] (7.5,1.75)--(8.5,3.75); 
			\node[blue] at (8.7,3.95) {$ \bu $};
			
			\draw[->, thick, blue] (2,5.5)--(3.5,7.5); 
			\node[blue] at (3.9,7.2) {$ \bw $};

		\end{tikzpicture}
		\caption{} 
		\label{Fig:example_initials}
	\end{figure}

	We observe that 
	\[
	\begin{array}{l}
		f_\bw = x_2^7 + x_1^4 x_2^4 = x_2^4 ( x_2^3 + x_1^4 ),
		\\[5pt]
		f_\bu =   x_1^7 x_2^2 + x_1^9 x_2 + x_1^{11} = x_1^7 (x_2^2 + x_1^2 x_2 + x_1^4). 
	\end{array}
	\]
	Clearly, $ f_\bw = f_{J,1} $.

	The proper transform  is
	\[
		f' =
		x_2'^7 u^3 + x_1'^4 x_2'^4 u + x_1'^7 x_2'^2 s + x_1'^9 x_2' s^3 + x_1'^{11} s^5 + x_1'^{6} x_2'^3 s^2 u  	.
	\]
	The polynomials corresponding to the $ f_\bw $ resp.~$ f_\bu $ are seen by setting $ s= 0$ in $ f' $ resp. $ u = 0 $:
	\[
	\begin{array}{lcl}
		f_\bw^\star  := x_2'^7 u^3 + x_1'^4 x_2'^4 u 
		 & \mbox{ resp. }&
		f_\bu^\star :=  x_1'^7 x_2'^2 s + x_1'^9 x_2' s^3 + x_1'^{11} s^5. 
	\end{array}
	\]
	
	The multi-weighted blow-up has three charts,
	where
	$ x_1' s \neq 0$,
	$ x_2' u \neq 0$,
	$ x_1' x_2' \neq 0$, respectively.

	First suppose 
	$ x_1' s \neq 0$.
	We take the \'etale slice $ s = x_1' = 1 $.
	As the situation is unchanged away from the center, 
	we can focus on the points on the exceptional divisors of which only $ \{ u=0\} $ is seen in this chart. 
	The log-order with respect to $ u $ provides an upper bound for the order at points on $ \{ u = 0 \} $.
	Hence, we may plug in $ u = 0 $ and get to 
	$ 
		x_2'^2 + x_2' + 1.
	$ 
	This is the same as plugging in $ s = x_1' = 1 $ in $ f_\bu^\star $. 
	The order of this polynomial is at most $ 2 $ which is strictly smaller than $ \nu_q (C) = 7 $, as desired.
	
	Next consider the chart where
	$ x_2' u $ is invertible.
	As in the previous chart, 
	we pass to the \'etale slice $ x_2' = u = 1 $
	and consider the log-order with respect to the exceptional variable $ s $. 
	We get the polynomial
	$ 
		1 + x_1'^4
	$ 
	which is connected to $ f_\bw^\star $. 
	Again, we see that the order is bounded by a value strictly smaller than $ \nu_q (C) $.
	
	Finally, we turn to the last chart, $ x_1' x_2' $ invertible.
	Here, we plug in $ x_1' = x_2' =  1 $ and get
	$ 
		u^3 + u + s +  s^3 + s^5 + s^2 u  	.
	 $ 
	 	It only remains to consider the point $ \{ s = u = 0 \} $,
	at which the order is $ 1 $ and hence also strictly smaller than $ 7 $, as needed.
	 
	 In conclusion, the order dropped in every chart below $ 7 $.
 \end{Ex}

We briefly look into a variant of Example~\ref{Ex:Dan}

\begin{Ex}
	\label{Ex:Dan_modified}
	Let $ \K =\IF_3 $, $ q \in \IA^2$ the origin and $ C = \{ g = 0 \} \subset \IA^2 $ for  
	\[
	g = x_2^7 + x_1^4 x_2^4 + x_1^8 x_2 + x_1^7 x_2^2 + x_1^9 x_2 + x_1^{11} + x_1^{6} x_2^3.  
	\]
	We still have $ \bw = (w_1, w_2) = (3,4) $ and $ \bu = (1,2) $.
	But now, we get
	\[
	\begin{array}{lcl}
		g_\bw = x_2^7 + x_1^4 x_2^4  + x_1^8 x_2 
		&
		\mbox{ and }
		&
		g_\bu =  x_1^8 x_2. 
	\end{array}
	\]
	
	In the chart where $ x_2' u $ is invertible,
	we substitute $ x_2' = u = 1 $ and the log-order with respect to $ s $	
	leads us to the polynomial
	\[
		\gamma(x_1'): = 1 + x_1'^4 + x_1'^8
	\]
	coming from $ g_\bw $.
	Due to the term $ x_1'^8 $ one might think that the argument that the order is strictly smaller than $ 7 $ does not apply anymore. 
	This is not the case. We note that $\gamma(x_1') = \tilde\gamma(x_1'^4)$, where $\tilde\gamma(x) = 1+ x+x^2$ has degree 2. Factoring $\tilde\gamma(x) = \prod (x-\alpha_i)^{m_i}$, we have that $\alpha_i \neq 0$ and $m_i\leq 2$. Since $p=3\nmid  4$, we have that $x_1'^4 - \alpha_i$ is separable, hence the multiplicity of every root of $\gamma$ is $\leq 2 < 7$, as needed. In the general argument, this step is subsumed in the proof of Theorem \ref{Thm:Main1}\eqref{It:order-drops}.
\end{Ex}

In general, we have:

\begin{proof}[Proof of Theorem~\ref{Thm:Main2}\eqref{It:invariant-drops-p}: the invariant drops in the multi-weighted blow-up] \hfill

We prove that the order drops, which, in the Deligne--Mumford setting, implies that the invariant drops.

	Set $ \nu := \nu_q (C) $.	
	The multi-weighted blow-up has three charts,
	where
	$ x_1' s \neq 0$,
	$ x_2' u \neq 0$,
	$ x_1' x_2' \neq 0$, respectively.

	First suppose 
	$ x_1' s \neq 0$.
	We take the \'etale slice $ s = x_1' = 1 $.
	As the situation is unchanged away from the center, 
	we can focus on the points on the exceptional divisors of which only $ \{ u=0\} $ is seen in this chart. 
	The log-order with respect to $ u $ provides an upper bound for the order at points on $ \{ u = 0 \} $.
	Hence, we may plug in $ u = 0 $ and get to 
	$ 
		 f' (1,x_2',1,0).
	$ 
         The order of this polynomial is $<a_2$. If $\nu=a_2<a_1$ we are done. However, if $a_1<a_2$, then in fact the monomial $x_1^{a_1}$ appears in $f$ with non-zero coefficient, and $x_1'^{a_1}$ is the only monomial appearing with non-zero coefficient in $f'(x_1',x_2',s,0)$. Thus, $f'(1,x_2',1,0) $ is that non-zero constant, and there is nothing to prove.

	Next consider the chart where
	$ x_2' u \neq 0$.
	As in the previous chart, 
	we pass to the \'etale slice $ x_2' = u = 1 $
	and consider the log-order with respect to the exceptional variable $ s $.

	We get the polynomial
	$ 
		f'(x_1',1,0,1).
	$ 
	This is the polynomial appearing in the corresponding chart of the weighted blow-up. We have shown that the order drops on the weighted blow-up in Theorem \ref{Thm:Main1}\eqref{It:order-drops}.
	
	Finally, we turn to the last chart $ x_1' x_2'\neq 0 $.
	Here, we plug in $ x_1' = x_2' =  1 $ and get
	$ 
		f'(1,1,s,u).
	 $ 
	 To complete the proof we need only check the order at the point where $ \{ s=u=0 \} $.
		
		The terms that we can control are those coming from the initial forms $ f_\bw $ and $ f_\bu $. 
		Notice that these polynomials lie in $ \K[u] $ resp.~$ \K[s] $. 
		If $ \nu_1 = a_1 < a_2 $ then $ x_1^{a_1} $ appears with non-zero coefficient in $ f(x_1,x_2) $ as well as in $ f_\bw(x_1,x_2) $.
		Since $ x_1 $ is substituted by $ x_1' s^{w_1} u $,
		the monomial $ u^{a_1 - \nu_\bu(f)} $ appears with non-zero coefficient in $  f'(1,1,s,u) $ and especially without an $ s $-factor.
		This provides a bound of the order at $ \{s=u=0\} $ which is strictly smaller than $ \nu_q (C) = \nu = a_1 $ since  $ \nu_\bu(f)>0 $.
		
		Suppose $ a_1 > a_2 = \nu $.
		Therefore, $ x_2^{a_2} $ has non-zero coefficient in the expansion of $ f(x_1, x_2) $ and in $ f_\bw(x_1, x_2 ) $.
		By definition, we have 
		for every $ x_1^\alpha x_2^\beta $ which appears with non-zero coefficient in an expansion of $ f $:
		\[
			 w_1 \alpha + w_2 \beta \geq \nu_\bw (f) = a_1 w_1 = a_2 w_2.
		\]
		  Applying the above inequality to an $(\alpha,\beta)$ for which $\nu_\bu(f) = \alpha+\kappa\beta$, and using that $w_1$ and $w_2$ are coprime, we obtain:
		\[
		\nu_\bu(f)
		= 
		\alpha + \left\lceil \frac{w_2}{w_1}\right\rceil  \beta
		 > 
		 \alpha + \frac{w_2}{w_1}  \beta
		 =   \frac{w_1 \alpha + w_2 \beta }{w_1} \geq  a_1.
		\]
        Since we plug in
		$x_2's^{w_2}u^\kappa $
		for $ x_2 $,
		we obtain from $ x_2^{a_2} $ in $  f'(1,1,s,u) $ the term
		$ u^{\kappa a_2 - \nu_\bu(f) } $ without an $ s $-factor.
		We observe that 
		\[
			\kappa a_2 - \nu_\bu(f)
			< \kappa a_2 - a_1 
			= \left( \left\lceil \frac{w_2}{w_1}\right\rceil - \frac{w_2}{w_1} \right) a_2 
			< a_2,
		\]
		where $\lceil {w_2}/{w_1} \rceil - {w_2}/{w_1} < 1 $ by definition.
		In Figure~\ref{Fig:nu_u} we illustrate how the difference $ 
			\kappa a_2 - \nu_\bu(f) $ arises in the Newton polyhedron of $ f $. 
			(Note that we only draw the faces of the Newton polyhedron determined by $ \bu $ and $ \bw $.)
			\begin{figure}[h]
			\begin{tikzpicture}[scale=0.65]
				
				\filldraw[black!15!white] (12.5,0)--(12,0)--(9,1)--(7.5,1.5)--(3,4)--(0,7)--(0,7.5)--(12.5,7.5);
				
				\draw[->, thick] (0,0)--(12.75,0); 
				\draw[->, thick] (0,0)--(0, 7.75); 
				
				\node at (13.3,-0.1) {$ x_1 $};
				\node at (-0.6,7.8) {$ x_2 $};
				
				\draw[very thick] (7,0.1) -- (7,-0.2);
				\node at (7, -0.5) {$ a_1 $};
				\draw[very thick] (0.1,7) -- (-0.2,7);
				\node at (-0.6,6.8) {$ a_2 $};
				
				\draw[very thick]
				(0,7.5) -- (0,7) -- (3,4);
				\draw[very thick, dotted]
				(3,4)--(7,0);
				
				\draw[very thick] (12.5,0)--(12,0)--(7.5,1.5);
				\draw[very thick, dashed] (7.5,1.5)--(-1.5,4.55);
				\draw[very thick, dashed] (12,0)--(13.5,-0.5);
				
				\draw[very thick, dashed] (-1.5,7.55) --(13.5,2.5);

				\node at (-3.5,4.6) {$ \nu_\bu (.) = \nu_\bu (f)$};
				\node at (-3.25,7.6) {$ \nu_\bu (.) = \kappa a_2 $};

				\filldraw (12,0) circle (3pt);
				\filldraw (7.5,1.5) circle (3pt);
				\filldraw (3,4) circle (3pt);
				\filldraw (0,7) circle (3pt);
				
				\draw[<->,very thick, blue] (5.25,2.25)-- (6.16,4.98);
				\node[blue] at (7.4,3.2) {$ \kappa a_2 - \nu_\bu (f) $};

				\draw[->, thick, gray] (10.5,0.5)--(11,2); 
				\node[gray] at (11.5,2) {$ \bu $};
				
				\draw[->, thick, gray] (1.5,5.5)--(2.5,6.5); 
				\node[gray] at (3,6.5) {$ \bw $};

			\end{tikzpicture}
			\caption{} 
			\label{Fig:nu_u}
		\end{figure}
		 
		So  the order of $ f' $ at the point $ \{ s = u = 0 \} $ is strictly smaller than $ \nu = \nu_q (C)  $.
		The assertion follows.
\end{proof}

\end{document}